\documentclass{article}
\usepackage{graphicx}
\usepackage{float}
\usepackage{mdframed}
\usepackage[most]{tcolorbox}
\usepackage{amsmath}
\usepackage{amsthm}
\usepackage{amsfonts}
\usepackage{amssymb}
\usepackage{bm}
\usepackage{mathrsfs}
\usepackage[utf8]{inputenc}
\usepackage{geometry}
\usepackage{orcidlink}
\usepackage[url=false]{biblatex}
\usepackage[font={small,it}]{subcaption}
\usepackage{hyperref}
\numberwithin{equation}{section}
\newtheorem{thrm}{Theorem}[section]
\newtheorem{lmm}[thrm]{Lemma}

\newtheorem{rmrk}[thrm]{Remark}

\newcommand{\bu}{\bm{u}}

\newcommand{\bk}{\bm{K}}

\newcommand{\bv}{\bm{v}}

\newcommand{\dotp}{\dot{p}}
\newcommand{\dotu}{\dot{\bm{u}}}

\newcommand{\telta}{\overline{\delta}}

\newcommand{\tilu}{\tilde{\bm{u}}_n^h}
\newcommand{\tilp}{\tilde{p}_n^h}

\newcommand{\enu}{e_n^{\bm{u}}}
\newcommand{\inu}{a^{\bm{u}}_n}
\newcommand{\inp}{a^p_n}
\newcommand{\enp}{e_n^p}

\newcommand{\frun}{\delta_t \bm{u}_n}
\newcommand{\frunh}{\delta_t \bm{u}_n^h}
\newcommand{\frpnh}{\delta_t p_n^h}

\newcommand{\franu}{\delta_t a_n^{\bm{u}}}
\newcommand{\frenu}{\delta_t e_n^{\bm{u}}}

\newtcolorbox{problem}[1][]{
    colback=white,
    colframe=black,
    boxrule=0.5pt,
    sharp corners,
    title=Problem (Q),
}

\newtcolorbox{discreteproblem}[1][]{
    colback=white,
    colframe=black,
    boxrule=0.5pt,
    sharp corners,
    title=Problem (Q-D),
}

\newtcolorbox{simpleproblem}[1][]{
    colback=white,
    colframe=black,
    boxrule=0.5pt,
    sharp corners,
    title=Problem (Q-D-S),
}

\begin{document}

\title{Numerical analysis of the Biot equations coupled to frictional contact mechanics}
\author{Marius Nevland%
\thanks{Center for Modeling of Coupled Subsurface Dynamics, Department of Mathematics, University of Bergen, Allégaten 41, Bergen, 5020, Norway.}\
\thanks{Corresponding author: marius.nevland@uib.no}\ ,
Kundan Kumar\footnotemark[1]\ ,
Inga Berre\footnotemark[1]\ ,
Jakub Wiktor Both\footnotemark[1]\ ,
Eirik Keilegavlen\footnotemark[1]
}
\date{}

\setlength{\parindent}{0em}
\setlength{\parskip}{1em}
\maketitle
\begin{abstract}
We consider a mathematical model of a poro-visco-elastic medium subject to frictional contact with a rigid obstacle, and study its numerical approximation. This model couples the Biot equations and contact conditions in the form of normal compliance and Coulomb friction. The resulting variational problem consists of a linear partial differential equation coupled to a nonlinear variational inequality. We propose and analyze a fully discrete numerical scheme for this problem, using conformal finite elements in space and the implicit Euler method in time. Existence and uniqueness of the discrete solution is established, and stability and a priori error estimates are derived. A numerical experiment is performed in which numerical error estimates are computed and compared to the theoretical results.
\end{abstract}
\section{Introduction}

In this paper, we consider a mathematical model of a linearly viscoelastic, porous body that is saturated with a fluid (also called a \textit{poroelastic} body), and which is subject to contact conditions including friction. This can be viewed as an extension of classical models of linear elasticity subject to contact conditions, introducing a coupling of such models to fluid flow, resulting in a multiphysics problem.

Problems of linear elasticity subject to contact conditions lead to so-called variational inequalities \cite{duvant2012inequalities,kikuchi_oden}. The modeling of contact with friction typically leads to two types of constraints. The first type of constraint restricts the movement of the elastic body in the normal direction, and it is often formulated as a Karush-Kuhn-Tucker type inequality \cite{kikuchi_oden}, in which impenetrability between the body and obstacle is enforced (also called a Signorini condition). An alternative model is the normal compliance model, in which there is a constitutive relationship between the normal traction and the penetration \cite{martins_oden}. The second type of constraint regards the tangential motion of the body, modeling the tendency of the body to either stick or slip when there is contact, with the Coulomb friction law being a common model. 
%We shall restrict our attention to dynamic friction laws, where the slip is measured continuously using time derivatives, resulting in a time-dependent variational inequality. This is a natural choice when the contact conditions are coupled to a time-dependent flow problem.

The model of a linearly elastic body subject to the Signorini condition and the Coulomb friction law is notoriously difficult to analyze \cite{chouly2023finite}, with the central caveat being that this combination results in only a quasi-variational inequality \cite{quasi_variational}. However, if the Signorini condition is replaced by a normal compliance law, we obtain a classical variational inequality, which is more tractable for analysis. For this combination of the normal compliance and Coulomb friction laws, existence and uniqueness can be proven, given that a viscoelastic term is included in the stress tensor \cite{martins_oden}; this term is necessary to control the time derivative of displacement appearing in the Coulomb friction law. Numerical analysis of this model, including error estimates and convergence rates, have also successfully been conducted in works by Sofonea and Han \cite{hemivariational_ineq,han2002quasistatic}. Using the implicit Euler method, they obtained first order convergence in time, while the spatial convergence order is reduced by a square root. The latter is common in contact problems, due to the contact conditions reducing the regularity of the solution.

While there is a large body of literature on numerical analysis of an elastic body subject to contact conditions, including several books written on the subject \cite{chouly2023finite,han2002quasistatic,kikuchi_oden}, the case of a poroelastic body subject to contact conditions remains relatively unexplored. Coupled flow and elasticity in porous media (excluding contact mechanics) is frequently modeled by the Biot equations \cite{biot1941general}, which is a system of linear PDEs coupling linear elasticity to Darcy flow. Analysis of the Biot equations has been the subject of several works, including existence, uniqueness and regularity of the continuous problem \cite{showalter2000diffusion} and numerical analysis of finite element approximations, including the derivation of optimal order a priori error estimates \cite{murad1994stability,phillips_discrete,wheeler_l2}. 

There are a few works on numerical analysis of poroelasticity with contact conditions, in the context of mixed-dimensional models of fractured porous media, where the contact between the two sides of a fracture is modeled. In an article of Bonaldi et al. \cite{bonaldi_multiphase}, a two-phase, mixed-dimensional poromechanical model was considered, together with the Signorini condition and Coulomb friction law, and the stability and existence of a solution to their numerical scheme was proven. As previously mentioned, this combination of contact conditions causes significant difficulties, and as a result, the convergence to the continuous solution as the mesh size and time steps tend to zero was not proven. Another work of Bonaldi et al. \cite{bonaldi2024numerical} considered mixed-dimensional poromechanics with a Signorini condition, but without friction; this simplified contact model allowed for a proof of the convergence of the discrete solution to the continuous solution in the limit. Neither of the works included a priori error estimates between the continuous and discrete solutions. To the authors' knowledge, the only existing work that includes a priori error estimates is a recent paper by Bertrand and Banz \cite{banz2025contact}. In this paper, a numerical analysis of both a standard Galerkin method and an $hp$-FEM was performed on a time-independent variant of the Biot system subject to Signorini contact conditions, but without friction. In the case of the $hp$-FEM, they derive convergence rates that are optimal for the pressure, while they are reduced for the displacement. However, there does not appear to be any work on a priori error estimates for a space-time discretization of the time-dependent Biot equations subject to frictional contact constraints.

In this paper, we bridge this research gap, by presenting a numerical analysis on a fully discrete scheme for the time-dependent Biot equations in a poro-visco-elastic medium, subject to contact constraints including normal compliance and Coulomb friction. There are two main reasons for our choice of this particular model. First, the well-posedness of a poro-visco-elastic model subject to these contact constraints (generalized to the aforementioned mixed-dimensional setting of fractured porous media), has recently been proven by de Hoop and Kumar \cite{dehoop2023}, and the well-posedness of our model easily follows from their work. Second, as previously mentioned, the combination of viscoelasticity, normal compliance and Coulomb friction for the mechanical subproblem has been shown to be tractable for numerical analysis, including a priori error estimates.

Our numerical scheme uses conformal finite elements in space and the implicit Euler method in time. Existence, uniqueness and stability of the discrete solution is established, and a priori error estimates and corresponding convergence rates are proven. The existence of the discrete solution is proven by a novel method, in which we combine a fixed-point argument from the contact mechanics literature with the fixed-stress splitting scheme, the latter being a common way of solving the Biot equations by decoupling the flow and mechanics and iterating between the two problems \cite{kim2011stability,mikelic2013convergence}. Here, the fixed-stress splitting scheme is not used as a practical algorithm, as is typical, but rather as a theoretical tool to make the analysis feasible. The techniques used to derive the error estimates are a combination of those used by Girault et al. \cite{poroelastic_notmixed, Girault2019} and by Phillips and Wheeler \cite{phillips_discrete} for the Biot equations (the former two being in a mixed-dimensional setting) and the works of Sofonea and Han \cite{hemivariational_ineq,han2002quasistatic} on time-dependent variational inequalities. A central difficulty when considering poroelastic problems is the presence of a term accounting for the coupling of flow and mechanics. However, it will be shown that by combining the techniques of the aforementioned papers, this coupling term cancels in the stability and error analysis, and hence does not pose a problem.

The paper is structured as follows. We start by introducing the mathematical model for coupled flow and mechanics including frictional contact mechanics in Section \ref{sec:model}. In Section \ref{sec:variational_form}, we put the model in variational form, and assert the existence and uniqueness of a solution to this variational problem. In Section \ref{sec:discretization}, a fully discrete numerical scheme is proposed and analyzed. A numerical experiment is presented in Section \ref{sec:num_exp}, in which numerical convergence orders are obtained and compared to the theoretical results of Section \ref{sec:discretization}. Finally, concluding remarks are provided in Section \ref{sec:conclusion}.

\section{Model formulation} \label{sec:model}

Let $\Omega \subset \mathbb{R}^d$, $d=2$ or 3, be an open, bounded, connected domain with Lipschitz boundary $\Gamma=\partial \Omega$ and outwards unit normal $\bm{\nu}$, and denote its closure by $\overline{\Omega}$. This domain represents the interior of a viscoelastic, porous body that will potentially come into contact with a rigid obstacle. Within this domain, we model coupled flow and deformation by the quasi-static Biot equations for a viscoelastic, homogeneous, isotropic, porous solid saturated with a slightly compressible viscous fluid. The unknown variables to be solved for are the pressure $p$ and displacement $\bu$; time derivatives of these variables are denoted by a dot, $\dot{p}$ and $\dot{\bu}$.

Balance of momentum of the solid reads
\begin{equation} \label{eq:mom_bal}
    -\nabla \cdot \bm{\sigma}^{\text{por}}=\bm{f} \ \ \ \text{in} \ \ \ \Omega \times (0,T),
\end{equation}
where $\bm{\sigma}^{\text{por}}$ denotes the total poroelastic stress tensor and $\bm{f}$ denotes body forces. The constitutive relation for $\bm{\sigma}^{\text{por}}$ is
\begin{equation} \label{eq:stress_total}
    \bm{\sigma}^{\text{por}}(\bm{u},p)=\bm{\sigma}(\bm{u})-\alpha p \bm{I},
\end{equation}
where $\bm{I}$ is the identity matrix, $\bm{u}$ is the displacement of the solid, $p$ is the fluid pressure, $\alpha>0$ is the dimensionless Biot coefficient, and $\bm{\sigma}$ is the viscoelastic stress tensor, following the Kelvin-Voigt model \cite{Sofonea_Matei_2012}:
\begin{equation} \label{stress_mechanics}
    \bm{\sigma}(\bm{u})=2G\bm{\varepsilon}(\bm{u})+\lambda(\nabla \cdot \bm{u})\bm{I}+\gamma\bm{\varepsilon}(\dotu)+\gamma(\nabla \cdot \dotu)\bm{I},
\end{equation}
where $\lambda$ and $G$ are the Lamé constants , $\gamma>0$ is a viscoelastic parameter, and $\bm{\varepsilon}(\bm{u})$ is the linearized strain tensor:
\begin{equation} \label{eq:strain_tensor}
    \bm{\varepsilon}(\bm{u})=\frac{1}{2}\left(\nabla\bm{u}+(\nabla\bm{u})^T
\right).
\end{equation}
The volumetric fluid flux follows Darcy's law (for simplicity, we neglect gravitational effects):
\begin{equation} \label{darcy}
    \bm{v}=-\frac{\bm{K}}{\mu}\nabla p,
\end{equation}
where $\bm{K}$ is the absolute permeability tensor, assumed to be symmetric, bounded, uniformly positive definite in space and constant in time, and $\mu>0$ is the constant fluid viscosity. The linearized fluid mass balance equation reads
\begin{equation} \label{eq:biot_mass_balance}
   c_0 \dot{p}+\alpha\nabla \cdot \dot{\bm{u}}+\nabla \cdot \bv=\psi \ \ \ \text{in} \ \ \ \Omega \times (0,T),
\end{equation}
where $c_0>0$ is the specific storage coefficient, and $\psi$ is a mass source or sink term.

The classical Biot problem consists of finding a pressure field $p$ and a displacement field $\bu$ that solve equations \eqref{eq:mom_bal} and \eqref{eq:biot_mass_balance}. We extend this problem by imposing frictional contact conditions between the porous body and the rigid obstacle. To this end, we introduce a subset $\Gamma_c \subset \Gamma$ of the boundary that potentially comes into contact with the rigid obstacle. For any vector-valued function $\bv$ defined on $\Gamma_c$, we define its normal and tangential components by
\begin{equation}
    v_{\nu}=\bv \cdot \bm{\nu}, \ \ \bv_{\tau}=\bv-v_{\nu}\bm{\nu},
\end{equation}
respectively. We denote by $\bm{t}$ the effective contact traction on $\Gamma_c$, defined as 
\begin{equation} \label{eq:contact_traction}
    \bm{t}=\bm{\sigma}^{\text{por}}\cdot \bm{\nu}.
\end{equation}
For the normal component of the contact traction, we impose a normal compliance law, similar to the one used by Martins and Oden \cite{martins_oden}. It introduces a constitutive relationship between the normal contact traction and the penetration, and we formulate it as follows:
\begin{equation} \label{eq:regularized_normal_compliance}
    t_{\nu} = -c_p({u}_{\nu}-g)_+ \ \ \text{on} \ \ \Gamma_c\times(0,T),
\end{equation}
where $c_p>0$ is a constant, which can be interpreted as the normal stiffness of the interface, $u_{\nu}$ is the normal component of the displacement, $g$ is the normal gap between the body and the obstacle in the undeformed configuration, and $(\cdot)_+$ denotes the positive cut of the function. 

The Coulomb friction law states that during contact, the body slips if the tangential contact traction reaches a friction bound, while no slipping occurs (the body sticks to the obstacle) if the bound is not reached. Moreover, this friction bound is proportional to the magnitude of the normal contact traction, with the proportionality constant being the friction coefficient, $\mu_{\text{fr}}$. Taking into account the normal compliance law \eqref{eq:regularized_normal_compliance} for the normal contact traction, the Coulomb friction law, imposed on $\Gamma_c \times (0,T)$, reads:
\begin{equation} \label{eq:friction_law}
\begin{aligned}
&\text{If} \ u_{\nu} \leq g \ \text{then} \ \ \lvert \bm{t}_{\tau}\rvert =0,\\
   & \text{If} \ u_{\nu} >g  \ \text{then} \ \begin{cases}
        \lvert \bm{t}_{\tau}\rvert \leq \mu_{\text{fr}} c_p(u_{\nu}-g),\\
        \lvert \bm{t}_{\tau}\rvert < \mu_{\text{fr}} c_p(u_{\nu}-g)  \implies \dot{\bu}_{\tau}=\bm{0}, \\
        \lvert \bm{t}_{\tau}\rvert = \mu_{\text{fr}} c_p(u_{\nu}-g) \implies \exists \ \zeta \geq 0 \ , \ \dot{\bu}_{\tau}=-\zeta \bm{t}_{\tau}.
    \end{cases}
\end{aligned}
\end{equation}
Here $\bm{t}_{\tau}$ and $\bu_{\tau}$ denote the tangential components of the contact traction and displacement, respectively.

Suitable boundary conditions are prescribed to close the system. To this end, we introduce two different decompositions of the boundary into disjoint subsets, $\Gamma=\Gamma_p\cup\Gamma_f$ and $\Gamma=\Gamma_d\cup\Gamma_n\cup\Gamma_c$, corresponding to the flow and mechanical boundary conditions, respectively. Both $\Gamma_d$ and $\Gamma_p$ are assumed to have positive measure. For simplicity, we set the boundary conditions to be homogeneous, with the exception of the mechanical Neumann boundary condition:
\begin{equation}
\begin{aligned}
    &p=0 \ \ \ \text{on} \ \ \Gamma_p\times (0,T), \ \  \bk\nabla p\cdot\bm{\nu}=0 \ \ \text{on} \ \ \Gamma_f\times (0,T),\\
    &\bu=\bm{0} \ \ \  \text{on} \ \ \Gamma_d\times (0,T), \ \ \bm{\sigma}^{\text{por}}\cdot\bm{\nu}=\bm{b} \ \ \text{on} \ \ \Gamma_n\times(0,T),
\end{aligned}
\end{equation}
Finally, initial values of the displacement and pressure are prescribed, i.e. $\bu(0)=\bu_0$, $p(0)=p_0$. In practice, one often ensures compatibility between these values, for instance by solving \eqref{eq:mom_bal} with prescribed $p=p_0$.

\section{Variational formulation} \label{sec:variational_form}

In this section, the model of Section \ref{sec:model} is put in variational form, and existence and uniqueness of solutions to the variational form is asserted. We start by reviewing the functional-analytic notation to be used in this paper.

\subsection{Notation and functional setting} \label{sec:notation}
Denote the space of second order symmetric tensors on $\mathbb{R}^d$ by $\mathbb{S}^d$. The usual $L^2$ inner product will be used for functions with values in both $\mathbb{R}$, $\mathbb{R}^d$ and $\mathbb{S}^d$. In all cases, we use parentheses $(\cdot,\cdot)$ to denote this inner product over $\Omega$. In cases where the integration domain is not $\Omega$, we specify the domain by a subscript; for instance, $(\cdot,\cdot)_{\Gamma_n}$ denotes the $L^2$ inner product over $\Gamma_n$. The $L^2$-space of vector-valued functions, with output space $\mathbb{R}^d$, is denoted by a boldface, $\bm{L}^2(\Omega)$. However, for simplicity, we drop the boldface notation when denoting the $L^2$-norm, and use the same notation $\lVert \cdot \lVert_{L^2(\Omega)}$ for functions with values in both $\mathbb{R}$, $\mathbb{R}^d$ and $\mathbb{S}^d$.

% We use standard notation for the canonical inner products on $\mathbb{R}^d$ and $\mathbb{S}^d$, namely,
% \begin{equation} \label{eq:inner_products}
% \begin{aligned}
%     &\bm{u} \cdot \bm{v}=\sum_i u_i v_i \ \ \forall \bm{u}, \bm{v} \in \mathbb{R}^d, \\
%     &\bm{\sigma} : \bm{\tau} = \sum_i \sum_j \sigma_{ij}\tau_{ij} \ \ \forall \bm{\sigma}, \bm{\tau}\in \mathbb{S}^d.
% \end{aligned}
% \end{equation}
% The $L^2$ inner product will be used for functions defined on $\Omega$ with values in both $\mathbb{R}$, $\mathbb{R}^d$ and $\mathbb{S}^d$. We use parentheses $(\cdot,\cdot)$ to denote the inner product in all cases, with the following meanings:
% \begin{equation} \label{eq:l2_inner_products}
% \begin{aligned}
%     &(p,q)=\int_{\Omega}pq \ dx \ \ \forall p,q \in L^2(\Omega;\mathbb{R}), \\
%     &(\bm{u},\bm{v})=\int_{\Omega}\bm{u} \cdot \bm{v} \ dx \ \ \forall \bm{u}, \bm{v} \in L^2(\Omega;\mathbb{R}^d), \\
%     &(\bm{\sigma},\bm{\tau})=\int_{\Omega} \bm{\sigma} : \bm{\tau} \ dx \ \ \forall \bm{\sigma},\bm{\tau} \in L^2(\Omega;\mathbb{S}^d).
% \end{aligned}
% \end{equation}
We let $H^s(\Omega)$, $1\leq s<\infty$ denote the usual Sobolev spaces, consisting of functions whose weak derivatives up to order $s$ lie in $L^2(\Omega)$. We denote by $\lVert \cdot \rVert_{H^s(\Omega)}$ the norm induced from the usual inner product on these spaces, and we denote by $\lvert \cdot \rvert_{H^s(\Omega)}$ the usual seminorm. Again we use boldface, $\bm{H}^1(\Omega)$, to denote the $H^1$-space of vector-valued functions, but use the same notation $\lVert \cdot \rVert_{H^{s}(\Omega)}$ and $\lvert \cdot \rvert_{H^s(\Omega)}$ for the $H^1$-norms for functions with values in both $\mathbb{R}, \mathbb{R}^d$ and $\mathbb{S}^d$.

Let $\gamma: H^1(\Omega) \to L^2(\Gamma)$ denote the trace operator, which satisfies the trace inequality:
\begin{equation} \label{eq:trace_inequality}
    \forall v \in H^1(\Omega), \ \ \ \lVert \gamma v \rVert_{L^2(\Gamma)} \leq C_{\tau}\lVert v \rVert_{H^1(\Omega)},
\end{equation}
for some $C_{\tau}>0$ depending only on $\Omega$ and $\Gamma$. For simplicity, we shall denote the trace $\gamma v$ of a function $v \in H^1(\Omega)$ simply by $v$. Next, we define the following closed subspace of $H^1(\Omega)$:
\begin{equation}
    Q=\{ q\in H^1(\Omega) \ ; \ q=0 \ \mathrm{on} \ \Gamma_p\}.
\end{equation}
which is the space for the pressure. Similarly, the space for the displacement is the following closed subspace of $\bm{H}^1(\Omega)$:
\begin{equation} \label{eq:func_space_displacement}
    \bm{V}=\{\bm{v} \in \bm{H}^1(\Omega) \ ; \ \bv=\bm{0}\ \mathrm{on} \ \Gamma_d \},
\end{equation}
We shall also need the dual of this space, which we denote by $\bm{V}^*$. Duality pairings are denoted by chevrons, $\langle \cdot, \cdot \rangle$. Since $\Gamma_p$ and $\Gamma_d$ have positive measures, we may use the Poincaré's inequality on the spaces $Q$ and $\bm{V}$, and Korn's first inequality on the space $\bm{V}$. Poincaré's inequality on $Q$ reads: There exists a constant $C_{\Gamma}$ depending only on $\Omega$ and $\Gamma$ such that
\begin{equation} \label{eq:poincare_inequality}
    \forall v \in Q, \ \ \lVert v \rVert_{L^2(\Omega)} \leq C_{\Gamma}\lvert v \rvert_{H^1(\Omega)},
\end{equation}
with an equivalent inequality also holding for $\bv \in \bm{V}$. Korn's first inequality reads: There exists a constant $C_{\kappa}$ depending only on $\Omega$ and $\Gamma$ such that
\begin{equation} \label{eq:korn_inequality}
    \forall \bm{v} \in \bm{V}, \ \ \lvert \bm{v} \rvert_{H^1(\Omega)} \leq C_{\kappa}\lVert \bm{\varepsilon}(\bm{v})\rVert_{L^2(\Omega)}.
\end{equation}
The time-dependent $L^2$-space, for functions defined on a time interval $(a,b)$ with values in a space $X$, is defined as follows:
\begin{equation} \label{eq:bochner_space}
    L^2(a,b;X)=\left\{ f \ \text{measurable in} \ (a,b) \ ; \ \int_a^b\lVert f(t) \rVert_X^2 dt < \infty\right\},
\end{equation}
equipped with the norm
\begin{equation} \label{eq:norm_bochner}
    \lVert f \rVert_{L^2(a,b;X)}=\left(\int_a^b \lVert f(t) \rVert_X^2 dt\right)^{1/2}.
\end{equation}
We shall also need the spaces $H^1(a,b;X)$ and $H^2(a,b;X)$, defined as follows:
\begin{equation}
    H^1(a,b;X)= \{ f\in L^2(a,b;X) \ ; \ \dot{f} \in L^2(a,b;X)\}.
\end{equation}
\begin{equation}
    H^2(a,b;X)= \{ f\in H^1(a,b;X) \ ; \ \ddot{f} \in L^2(a,b;X)\}.
\end{equation}
equipped with the norms
\begin{equation} \label{eq:norm_h1_bochner}
    \lVert f \rVert_{H^1(a,b;X)}=\left(\int_a^b \lVert f(t) \rVert_X^2 + \lVert \dot{f}(t) \rVert^2_X dt\right)^{1/2},
\end{equation}
\begin{equation} \label{eq:norm_h2_bochner}
    \lVert f \rVert_{H^2(a,b;X)}=\left(\int_a^b \lVert f(t) \rVert_X^2 + \lVert \dot{f}(t) \rVert^2_X + \lVert \ddot{f}(t) \rVert^2_X dt\right)^{1/2}.
\end{equation}
%The spaces $L^2(a,b;X), H^1(a,b;X)$ and $H^2(a,b;X)$ are all Hilbert spaces if $X$ is a Hilbert space.

%Finally, we denote by $C_0^{\infty}(\Omega)$ the space of infinitely differentiable functions with compact support on $\Omega$.

% We recall the following Green's formula: For a sufficiently regular $\bm{\sigma}: \bar{\Omega}\to\mathbb{S}^d$, we have
% \begin{equation} \label{eq:green_elasticity}
%     \int_{\Omega}\bm{\sigma} : \bm{\varepsilon}(\bm{v})dx + \int_{\Omega}(\nabla \cdot \bm{\sigma}) \cdot \bm{v}dx=\int_{\partial \Omega}\bm{\sigma}\bm{\nu} \cdot \bm{v}ds \ \ \forall\bm{v}\in H^1(\Omega)^d.
% \end{equation}
\subsection{Variational formulation} \label{sec:variational_form_actually}

We begin by defining some operators to be used in the variational formulation. The usual elastic and viscoelastic bilinear forms $A[\cdot,\cdot]: \bm{V} \times \bm{V} \to \mathbb{R}$ and $B[\cdot,\cdot]: \bm{V} \times \bm{V} \to \mathbb{R}$ are defined as follows
\begin{equation} \label{eq:bilinear_form_1}
    A[\bm{u},\bm{v}]=2G(\bm{\varepsilon}(\bm{u}),\bm{\varepsilon}(\bm{v}))+\lambda(\nabla \cdot \bm{u},\nabla \cdot \bm{v}),
\end{equation}
\begin{equation} \label{eq:bilinear_form_2}
    B[\bm{u},\bm{v}]=\gamma(\bm{\varepsilon}(\bm{u}),\bm{\varepsilon}(\bm{v}))+\gamma(\nabla \cdot \bm{u},\nabla \cdot \bm{v}).
\end{equation}
It is well known that these bilinear forms are continuous and coercive with respect to the $H^1$-norm \cite{ciarlet}:
\begin{equation} \label{eq:continuity_bilinear}
\begin{aligned}
&\forall \bm{u},\bm{v} \in \bm{V}, \ \text{for some} \ m_A>0, m_B>0, \\
    &\lvert A[\bm{u},\bm{v}] \rvert \leq m_A\lVert \bm{u} \rVert_{H^1(\Omega)}\lVert \bm{v} \rVert_{H^1(\Omega)}, \ \lvert B[\bm{u},\bm{v}]\rvert \leq m_B\lVert \bm{u} \rVert_{H^1(\Omega)}\lVert \bm{v} \rVert_{H^1(\Omega)},
\end{aligned}
\end{equation}
\begin{equation} \label{eq:v-ellipticity}
    A[\bm{v},\bm{v}]\geq\alpha_A\lVert \bm{v}\rVert^2_{H^1(\Omega)}, \ B[\bm{v},\bm{v}]\geq \alpha_B\lVert\bm{v}\rVert^2_{H^1(\Omega)}, \ \forall \ \bm{v}\in \bm{V}, \ \text{for some} \ \alpha_A>0,\alpha_B>0. 
\end{equation}
We also define two nonlinear functionals, $J_{\nu}[\cdot,\cdot]: \bm{V} \times \bm{V} \to \mathbb{R}$ and $J_{\tau}[\cdot,\cdot]: \bm{V} \times \bm{V} \to \mathbb{R}$, representing the contributions from the normal compliance law and the Coulomb friction law, respectively:
\begin{equation} \label{eq:j_norm_functional}
    J_{\nu}[\bm{u},\bm{v}]=\int_{\Gamma_c}c_p(u_{\nu}-g)_+v_{\nu} ds,
\end{equation}
\begin{equation} \label{eq:j_tang_functional}
    J_{\tau}[\bm{u},\bm{v}]=\int_{\Gamma_c}\mu_{\text{fr}} c_p(u_{\nu}-g)_+\lvert\bm{v}_{\tau}\rvert ds,
\end{equation}
and we collect these into a single nonlinear functional $J[\cdot,\cdot]: \bm{V} \times \bm{V} \to \mathbb{R}$:
\begin{equation} \label{eq:j_functional}
    J[\bm{u},\bm{v}]=J_{\nu}[\bu,\bv]+J_{\tau}[\bu,\bv].
\end{equation}
Finally, given sufficient regularity of $\bm{f}$ and $\bm{b}$, we define $\bm{f}^*\in \bm{V}^*$ as follows:
\begin{equation} \label{eq:source_mechanics_dual}
    \langle \bm{f}^*,\bv\rangle = (\bm{f},\bv)+(\bm{b},\bv)_{\Gamma_n} \ \  , \ \  \bv \in \bm{V}.
\end{equation}
The conformal variational problem, henceforth referred to as problem (Q), then reads as follows: 
\begin{problem}
Given $\bm{f} \in L^2(0,T;\bm{L}^2(\Omega))$, $\bm{b} \in L^2(0,T;\bm{L}^2(\Gamma_n))$, $\psi \in L^2(0,T;L^2(\Omega))$ and $g \in L^2(\Gamma_c)$, find $\bm{u} \in L^{\infty}(0,T;\bm{V})$ (with $\dot{\bm{u}} \in L^2(0,T;\bm{V})$) and $p \in L^{\infty}(0,T;L^2(\Omega))\cap L^2(0,T;Q)$ such that, for a.e. $t \in (0,T)$:
\begin{equation} \label{eq:weak_mechanics}
\begin{aligned}
    &A[\bm{u},\bm{v}-\dotu]+B[\dotu,\bm{v}-\dotu]-\alpha( p,\nabla \cdot (\bm{v}-\dotu))+ J[\bm{u},\bm{v}]-J[\bm{u},\dotu] \geq \langle\bm{f}^*,\bm{v}-\dotu \rangle \ \ \forall\bm{v} \in \bm{V},
\end{aligned}
\end{equation}
\begin{equation} \label{eq:weak_biot}
    (c_0\dotp+\alpha\nabla \cdot \dotu,q)+\frac{1}{\mu}(\bk\nabla p, \nabla q)=(\psi,q) \ \ \forall q \in Q,
\end{equation}
subject to the initial conditions:
\begin{align}
    \bu\lvert_{t=0} &= \bu_0, \label{eq:init_cond1}\\
    (c_0p+\alpha\nabla \cdot \bu)\lvert_{t=0} &= c_0p_0+\alpha \nabla \cdot \bu_0. \label{eq:init_cond2}
\end{align}
\end{problem}
From the regularity assumptions on the data and solution, equation \eqref{eq:weak_biot} implies that  $c_0\dot{p}+\alpha\nabla \cdot \dotu \in L^2(0,T;L^2(\Omega))$, and hence the second initial condition \eqref{eq:init_cond2} is meaningful.

The derivation of \eqref{eq:weak_biot} follows the usual strategy of assuming a sufficiently regular solution to equation \eqref{eq:biot_mass_balance} in strong form, multiplying by test functions and using Green's formula. The derivation of the variational inequality \eqref{eq:weak_mechanics} from equation \eqref{eq:mom_bal} and the contact conditions \eqref{eq:regularized_normal_compliance}, \eqref{eq:friction_law} is also classical; see e.g. chapter 10 of \cite{kikuchi_oden}.

\subsection{Existence and uniqueness of solutions} \label{sec:existence_sol}

Regarding the existence and uniqueness of a solution $(\bu,p)$ to problem (Q), belonging to the specified function spaces, we first note that existence and uniqueness have been proven for mathematical models of similar structure in the papers of Martins and Oden \cite{martins_oden} and de Hoop and Kumar \cite{dehoop2023}. In \cite{martins_oden}, the existence and uniqueness of solutions to a viscoelastic contact problem, with normal compliance and Coulomb friction, is proven. This proof is expanded in \cite{dehoop2023} to show the existence and uniqueness of solutions to the dynamic Biot equations in a fractured porous medium, including flow in fractures and fracture contact mechanics. The existence and uniqueness proof for problem (Q) will be nearly identical in structure to the proof of \cite{dehoop2023}. Therefore, rather than explicitly writing the proof, we instead point out the differences between the mathematical models considered in their work and in ours, and the corresponding modifications of their proof to accommodate our case. The techniques of the existence proof of \cite{dehoop2023} are fairly standard: The friction term is regularized, Galerkin approximations are constructed and passed to the limit, and compactness and monotonicity arguments are used to establish convergence of the nonlinear contact terms.

There are three differences between the model used in \cite{dehoop2023} and our model. First, they do not consider inhomogeneous Neumann conditions in their analysis; however, this change has no effect on the analysis. Secondly, they are considering the dynamic instead of the quasi-static Biot problem, meaning that an acceleration term of the form $\ddot{\bu}$ is included in their momentum balance equation. The inclusion of this term allows the velocity $\dotu$ to be taken in the slightly stronger space $L^{\infty}(0,T;\bm{L}^2(\Omega))\cap L^2(0,T;\bm{V})$, compared to our case of just $\dotu \in L^2(0,T;\bm{V})$. However, the space $L^2(0,T;\bm{V})$ is in fact sufficient to establish the aforementioned compactness estimates that are needed to prove convergence of the nonlinear contact terms (see page 423 of \cite{martins_oden} for the exact arguments), which is the only part of the existence proof that requires careful consideration.

The third difference is that they are modeling a fractured medium, which involves modeling the frictional contact mechanics between the two sides of a fracture embedded in the poro-visco-elastic medium, rather than our case of contact between a poro-visco-elastic medium and a rigid obstacle. Additionally, they are considering Darcy flow in the fracture as well as in the surrounding porous medium, resulting in an additional mass balance equation in the fracture, as well as additional coupling terms. However, if the fracture pressure is assumed to be constant, then by symmetry, the fractured medium model of \cite{dehoop2023} can be reduced to a model of contact between a poro-visco-elastic medium and a rigid obstacle. Hence, the proof of \cite{dehoop2023} is straightforward to adapt to our case. We conclude with the following Lemma:
\begin{lmm}
There exists a unique solution $(\bu,p)$ to problem (Q), belonging to the specified function spaces.
\end{lmm}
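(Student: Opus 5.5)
The plan is to adapt the Martins--Oden / de Hoop--Kumar strategy directly: regularize the friction term, construct Faedo--Galerkin approximations, derive uniform energy estimates, and pass to the limit using compactness and monotonicity. Uniqueness will come from an energy-type Gronwall argument.

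\textbf{Regularization and Galerkin step.} I replace $\lvert\bv_\tau\rvert$ in $J_\tau$ by the smooth convex function $\phi_\epsilon(\bv_\tau)=\sqrt{\lvert\bv_\tau\rvert^2+\epsilon^2}-\epsilon$. The variational inequality \eqref{eq:weak_mechanics} then becomes an equation:
\begin{equation*}
A[\bu,\bv]+B[\dotu,\bv]-\alpha(p,\nabla\cdot\bv)+J_\nu[\bu,\bv]+\int_{\Gamma_c}\mu_{\text{fr}}c_p(u_\nu-g)_+\,\phi_\epsilon'(\dotu_\tau)\cdot\bv_\tau\,ds=\langle\bm f^*,\bv\rangle.
\end{equation*}
I take finite-dimensional subspaces of $\bm V$ and $Q$. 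Since $B$ is coercive, the Galerkin system is an ODE system in $(\bu_m,p_m)$ whose leading matrix $\begin{pmatrix}B & -\alpha D^T\\ \alpha D & c_0M\end{pmatrix}$ is invertible. The nonlinearities are locally Lipschitz, so local solutions exist and extend globally once a priori bounds are available.

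\textbf{Uniform estimates.} I test the mechanics equation with $\dotu_m$ and the flow equation with $p_m$, then add. The coupling terms $\mp\alpha(p_m,\nabla\cdot\dotu_m)$ cancel. The friction term is nonnegative, because $\phi_\epsilon'(w)\cdot w\ge0$. The normal compliance term is bounded by $c\,(\lVert\bu_m\rVert_{H^1}+\lVert g\rVert_{L^2(\Gamma_c)})\lVert\dotu_m\rVert_{H^1}$ using the trace inequality \eqref{eq:trace_inequality}. By coercivity of $A$ and $B$, Young's inequality, and Gronwall, this gives bounds on $\bu_m$ in $L^\infty(0,T;\bm V)$, on $\dotu_m$ in $L^2(0,T;\bm V)$, and on $p_m$ in $L^\infty(0,T;L^2)\cap L^2(0,T;Q)$, all uniform in $m$ and $\epsilon$.

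\textbf{Passage to the limit.} I extract weak and weak-* limits. Since $\bu_m$ is bounded in $H^1(0,T;\bm V)$ and the trace map $\bm V\to \bm L^2(\Gamma_c)$ is compact, Aubin--Lions gives $\bu_m\to\bu$ strongly in $L^2(0,T;\bm L^2(\Gamma_c))$. This handles $(u_\nu-g)_+$ in both $J_\nu$ and $J_\tau$. The $\dotu$-dependence in the friction term is convex, so it is handled by writing the regularized equation as an inequality with $\phi_\epsilon(\bv_\tau)-\phi_\epsilon(\dotu_{m,\tau})$ and using weak lower semicontinuity. The term $B[\dotu_m,\dotu_m]$ is treated by energy identity and lower semicontinuity, which is the Minty-type argument of \cite{martins_oden}, p.~423. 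I let $m\to\infty$ and then $\epsilon\to0$. The flow equation is linear and passes to the limit directly. The initial condition \eqref{eq:init_cond2} follows because $c_0p+\alpha\nabla\cdot\bu$ lies in $H^1(0,T;Q^*)$.

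\textbf{Main obstacle.} The hardest step is this limit in the friction term, which couples strong trace convergence of $\bu$ with only weak convergence of $\dotu$. Only $L^2(0,T;\bm V)$ control of $\dotu$ is available here, since there is no inertial term. I expect it to suffice, because the nonlinearity is convex in $\dotu$ and only the product with the strongly convergent factor $(u_\nu-g)_+$ is needed.

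\textbf{Uniqueness.} For two solutions, I put $\bv=\dotu_2$ in the inequality for $\bu_1$ and $\bv=\dotu_1$ in the inequality for $\bu_2$, add the two, and add the difference of the flow equations tested with $p_1-p_2$; the coupling terms cancel again. The contact terms are Lipschitz in $\bu$ through the trace inequality. This gives
\begin{equation*}
\lVert\dotu_1-\dotu_2\rVert_{H^1}^2+\tfrac{d}{dt}\bigl(\lVert\bu_1-\bu_2\rVert_{H^1}^2+\lVert p_1-p_2\rVert_{L^2}^2\bigr)\le C\lVert\bu_1-\bu_2\rVert_{H^1}\lVert\dotu_1-\dotu_2\rVert_{H^1}.
\end{equation*}
Applying Young's inequality and Gronwall shows the two solutions coincide.
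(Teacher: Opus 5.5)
Your proposal is correct in outline and follows the route the paper relies on. The paper gives no argument of its own: it defers to de Hoop--Kumar and Martins--Oden, whose proof is exactly friction regularization, Galerkin approximation, and compactness/monotonicity in the limit with only $L^2(0,T;\bm V)$ control of $\dotu$, and your uniqueness step is the continuous-time version of the cross-testing computation the paper uses for uniqueness in Theorem \ref{thm:existence_discrete}.

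Two details to tighten. First, the Galerkin system is implicit in $\dotu_m$ through the regularized friction term, so solve for $\dotu_m$ by strong monotonicity of $B$ plus the convex friction term; there is also no $-\alpha D^T$ block, since $p$ is not differentiated in the mechanics equation. Second, in the limit passage the product $\alpha(p_m,\nabla\cdot\dotu_m)$ is weak$\times$weak just like $B[\dotu_m,\dotu_m]$, so it must be handled through the coupled energy identity: test the flow equation with $p_m$, then use weak lower semicontinuity of $\tfrac{c_0}{2}\lVert p_m(T)\rVert_{L^2(\Omega)}^2$ and $\tfrac{1}{\mu}\int_0^T\lVert\bk^{1/2}\nabla p_m\rVert_{L^2(\Omega)}^2\,dt$.
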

 
% The final difference between the model in \cite{dehoop2023} and our model is that they are considering the dynamic Biot problem, meaning that an acceleration term of the form $\ddot{\bu}$ was included in the equations. The inclusion of this term allows the velocity $\dotu$ to be taken in the slightly stronger space $L^{\infty}(0,T;L^2(\Omega))\cap L^2(0,T;\bm{V})$, compared to our case of just $\dotu \in L^2(0,T;\bm{V})$. However, the space $L^2(0,T;\bm{V})$ is all that is needed to establish the convergence of the nonlinear contact terms, which is the part of the proof that requires careful consideration.

\section{Discretization} \label{sec:discretization}

In this section, we will present a fully discrete scheme for numerically approximating solutions to the variational formulation \eqref{eq:weak_mechanics}, \eqref{eq:weak_biot}, using conforming finite elements in space and the implicit Euler method in time. We choose $N$ equidistant time points in the interval $[0,T]$. The time step size is accordingly $\Delta t=\frac{T}{N}$ and the discrete time points are $t_n=n\Delta t, \ 0 \leq n \leq N$. For any time-dependent function $v(\cdot,t)$, assumed to be continuous in time, we write $v_n:=v(\cdot,t_n)$. The backward finite difference approximation of the time derivative is denoted as follows:
\begin{equation} \label{eq:time_difference}
    \delta_t v_n :=\frac{v_n-v_{n-1}}{\Delta t}.
\end{equation}
Let $\mathcal{T}^h$ denote a regular family of conforming triangulations of $\overline{\Omega}$, indexed by the maximal diameter $h$ of the elements of $\mathcal{T}^h$. Let $k_1\geq1$ and $k_2\geq1$ denote two polynomial orders. We consider standard Lagrangian finite elements, where the space $\bm{V}$ of displacements and the space $Q$ of pressures are approximated by $\bm{V}^h$ and $Q^h$, respectively, defined as follows:
\begin{align} \label{eq:v_h}
    \bm{V}^h&=\left\{ \bv^h \in \bm{V} : \forall \ T\in\mathcal{T}^h, \ \bv^h\rvert_{T}\in \mathbb{P}^d_{k_1}(T) \right\},\\
    Q^h &= \left\{ q^h\in Q : \forall \ T\in\mathcal{T}^h, \ q^h\rvert_{T}\in \mathbb{P}_{k_2}(T) \right\},
\end{align}
where $\mathbb{P}_p(T)$ denotes the space of polynomials of order $p$ on $T$ ($\mathbb{P}_p^d(T)$ denotes the Cartesian product of $d$ such spaces). We also introduce the standard Lagrange interpolation operators $I_{\bm{V}^h}:\bm{V}\to \bm{V}^h$ and $I_{Q^h}: Q\to Q^h$, and use the notation $\bu^I := I_{\bm{V}^h}(\bu)$, $p^I:=I_{Q^h}(p)$. These satisfy the usual interpolation error estimates \cite{ciarlet}:
\begin{equation} \label{eq:interpolation_estimate}
    \lVert\bm{u}-\bu^I \rVert_{H^1(\Omega)}\leq C_1h^{r}\lvert\bu \rvert_{H^{r+1}(\Omega)}, \ \lVert p-p^I \rVert_{H^1(\Omega)} \leq C_2h^{s}\lvert p \rvert_{H^{s+1}(\Omega)},
\end{equation}
with $1 \leq r \leq k_1$ and $1 \leq s \leq k_2$, given that $\bu \in \bm{H}^{r+1}(\Omega)$ and $p \in H^{s+1}(\Omega)$. 

%Moreover, given sufficient regularity of the trace of the displacement, we have the following useful interpolation estimate on the contact boundary, again for $1 \leq s_1 \leq k_1$:
% \begin{equation} \label{eq:interp_error_boundary}
%     \lVert \bu-\bu^I\rVert_{L^2(\Gamma_c)} \leq C h^{s_1+1}\lvert \bu \rvert_{H^{s_1+1}(\Gamma_c)}, 
% \end{equation}
% given that the trace $\bu \in H^{s_1+1}(\Gamma_c)$.

\subsection{Fully discrete approximation} \label{sec:fully_discrete_form}
The following fully discrete mixed variational problem is proposed, called problem (Q-D).

\begin{discreteproblem}
Assume $\bm{f}\in H^1(0,T;\bm{L}^2(\Omega)), \bm{b} \in H^1(0,T;\bm{L}^2(\Gamma_n))$, $\psi\in H^1(0,T;L^2(\Omega))$ and $g \in L^2(\Gamma_c)$.\\

At time $t=0$, given initial data $p_0\in Q$ and $\bu_0 \in \bm{V}$, set $p^h_0=p_0^I$ and $\bu_0^h=\bu_0^I$. For any $n \in \{1,2,..,N\}$, given $\bm{u}_{n-1}^h, p_{n-1}^h$, find $\bm{u}_n^h \in \bm{V}^h$ and $p_n^h \in Q^h$ such that:
\begin{equation} \label{eq:discrete_mechanics}
\begin{aligned}
    A[\bm{u}_n^h,\bm{v}^h-\frunh]+B[\frunh,\bm{v}^h&-\frunh]-\alpha\left(p_n^h,\nabla \cdot (\bm{v}^h-\frunh)\right) \\
    &+J[\bm{u}_n^h,\bm{v}^h]-J[\bm{u}_n^h,\frunh] \geq \left\langle \bm{f}^*_n,\bm{v}^h-\frunh\right\rangle \ \ \forall \bm{v}^h \in \bm{V}^h,
\end{aligned}
\end{equation}
\begin{equation} \label{eq:discrete_biot}
    \left( c_0\frpnh+\alpha\nabla \cdot \frunh, q^h\right)+\frac{1}{\mu}\left(\bk\nabla p_n^h,\nabla q^h\right)=\left(\psi_n,q^h\right) \ \ \forall q^h \in Q^h,
\end{equation}
\end{discreteproblem}
The assumed regularity in time of the data $\bm{f}, \bm{b}$ and $\psi$ ensures that $\bm{f}_n^*$ and $\psi_n$ are well-defined functions in $\bm{V}^*$ and $L^2(\Omega)$, respectively.

\subsection{Existence and uniqueness of the discrete solution} \label{eq:existence_discrete}

We begin this section by proving an estimate on the $J$-functionals, which is needed in the upcoming proof of existence and uniqueness of the discrete solution. This estimate will also become useful in Section \ref{sec:error_estimates} when deriving error estimates between the discrete and continuous solutions. Similar types of estimates can be found in the book of Sofonea and Han \cite{han2002quasistatic}, where they are used in the numerical analysis of several types of frictional contact problems.
\begin{lmm} \label{lemma:friction_estimate}
    The $J$-functional \eqref{eq:j_functional} satisfies the following estimate:
    \begin{equation}
        J[\bu_1,\bv_2]-J[\bu_1,\bv_1]+J[\bu_2,\bv_1]-J[\bu_2,\bv_2] \leq C_J \lVert \bu_1-\bu_2 \rVert_{H^1(\Omega)}\lVert \bv_1-\bv_2 \rVert_{H^1(\Omega)},
    \end{equation}
    for all $\bu_1,\bu_2,\bv_1,\bv_2 \in \bm{V}$, with $C_J=C_{\tau}^2(1+\mu_{\textnormal{fr}})c_p$.
\end{lmm}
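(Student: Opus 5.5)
The plan is to treat the normal and tangential parts separately, reduce each to a pointwise estimate on $\Gamma_c$, and then pass to $H^1$-norms with the trace inequality \eqref{eq:trace_inequality}.

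First regroup the left-hand side by pairing terms with the same second argument. Since $J_\nu$ is linear in its second argument, and $J_\tau$ is linear in the factor $c_p(u_\nu-g)_+$, the left-hand side equals
\begin{equation*}
\int_{\Gamma_c} c_p\bigl[(u_{1\nu}-g)_+-(u_{2\nu}-g)_+\bigr]\Bigl[(v_{2\nu}-v_{1\nu})+\mu_{\text{fr}}\bigl(\lvert\bv_{2\tau}\rvert-\lvert\bv_{1\tau}\rvert\bigr)\Bigr]\,ds .
\end{equation*}
Here $u_{i\nu}$ and $v_{i\nu}$ are the normal components of $\bu_i$, $\bv_i$, and $\bv_{i\tau}$ are the tangential components.

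Next, estimate the two factors pointwise. The positive part is $1$-Lipschitz, so the first bracket is bounded by $\lvert u_{1\nu}-u_{2\nu}\rvert\le\lvert\bu_1-\bu_2\rvert$. For the second bracket, use $\lvert v_{2\nu}-v_{1\nu}\rvert\le\lvert\bv_1-\bv_2\rvert$. By the reverse triangle inequality, $\bigl\lvert\lvert\bv_{2\tau}\rvert-\lvert\bv_{1\tau}\rvert\bigr\rvert\le\lvert\bv_{1\tau}-\bv_{2\tau}\rvert\le\lvert\bv_1-\bv_2\rvert$, because the tangential projection $\bv\mapsto\bv-(\bv\cdot\bm\nu)\bm\nu$ is an orthogonal projection and hence has norm at most one. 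The integrand is therefore bounded by $c_p(1+\mu_{\text{fr}})\lvert\bu_1-\bu_2\rvert\,\lvert\bv_1-\bv_2\rvert$.

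Finally, apply Cauchy–Schwarz on $\Gamma_c\subset\Gamma$ to get the bound $c_p(1+\mu_{\text{fr}})\lVert\bu_1-\bu_2\rVert_{L^2(\Gamma)}\lVert\bv_1-\bv_2\rVert_{L^2(\Gamma)}$. Then apply the trace inequality \eqref{eq:trace_inequality} componentwise to each factor; this produces $C_\tau^2$ and gives exactly $C_J=C_\tau^2(1+\mu_{\text{fr}})c_p$.

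There is no real obstacle. The only points needing care are the sign bookkeeping when pairing the four terms, and the observation that the friction term's nonlinearity in $\bv$ (through $\lvert\bv_\tau\rvert$) is handled by the reverse triangle inequality rather than by linearity.
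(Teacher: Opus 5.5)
Your proof is correct and follows essentially the same route as the paper: regroup the four terms into one integral over $\Gamma_c$, use the $1$-Lipschitz property of $(\cdot)_+$, the reverse triangle inequality and the bounds $\lvert v_\nu\rvert,\lvert\bv_\tau\rvert\le\lvert\bv\rvert$, then Cauchy--Schwarz and the trace inequality to get $C_J=C_\tau^2(1+\mu_{\text{fr}})c_p$. The only cosmetic difference is that you bound the integrand pointwise before a single Cauchy--Schwarz, whereas the paper applies Cauchy--Schwarz to the normal and tangential integrals separately, using the reverse triangle inequality implicitly.
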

\begin{proof}
We estimate:
\begin{equation*} \label{eq:estimate_friction}
\begin{aligned}
    &\lvert J[\bm{u}_1,\bv_2]-J[\bm{u}_1,\bv_1]+J[\bm{u}_2,\bv_1]-J[\bm{u}_2,\bv_2]\rvert\\
    &=\left\lvert \int_{\Gamma_c} c_p ((\bm{u}_{1\nu}-g)_+-(\bm{u}_{2\nu}-g)_+)(\bv_{2\nu}-\bv_{1\nu}) ds+\int_{\Gamma_c}\mu_{\text{fr}} c_p ((\bm{u}_{1\nu}-g)_+-(\bm{u}_{2\nu}-g)_+)(\lvert \bm{v}_{2\tau} \rvert-\lvert\bm{v}_{1\tau}\rvert)ds \right \rvert \\
    &\leq c_p\lVert (\bm{u}_{1\nu}-g)_+-(\bm{u}_{2\nu}-g)_+ \rVert_{L^2(\Gamma_c)} \lVert\bv_{2\nu}-\bv_{1\nu}\rVert_{L^2(\Gamma_c)}+\mu_{\text{fr}} c_p \lVert (\bm{u}_{1\nu}-g)_+-(\bm{u}_{2\nu}-g)_+ \rVert_{L^2(\Gamma_c)}\lVert\bv_{2\tau}-\bv_{1\tau}\rVert_{L^2(\Gamma_c)}\\
    &\leq (1+\mu_{\text{fr}})c_p\lVert \bu_1-\bu_2\lVert_{L^2(\Gamma_c)}\lVert \bv_1-\bv_2\lVert_{L^2(\Gamma_c)}\\
    &\leq C_{\tau}^2(1+\mu_{\text{fr}})c_p\lVert \bu_1-\bu_2\lVert_{H^1(\Omega)}\lVert \bv_1-\bv_2\lVert_{H^1(\Omega)}.
\end{aligned}
\end{equation*}
The first inequality is due to the Cauchy-Schwarz and triangle inequalities. For the second inequality, we use that the map $\bu_n \mapsto (\bu_n-g)_+$ is Lipschitz continuous (with Lipschitz constant equal to one), in addition to the facts that $\lVert \bu_n \rVert \leq \lVert \bu \rVert$ and $\lVert \bu_{\tau} \rVert \leq \lVert \bu \rVert$. The trace inequality \eqref{eq:trace_inequality} then gives the third and final inequality.
\end{proof}

Our strategy for proving the existence of a solution to problem (Q-D) is as follows:

1. We first consider a simpler problem, where the contact mechanics terms are lagged. Existence of solutions to this simper problem is proven by constructing a fixed-stress splitting scheme, and showing the contraction of the scheme.

2. We then substitute back the full contact terms, and apply a fixed-point argument from the contact mechanics literature.

The simplified problem we will consider, which shall be called problem (Q-D-S), is the following: 
\begin{simpleproblem}
Assume $\bm{f}\in H^1(0,T;\bm{L}^2(\Omega)), \bm{b} \in H^1(0,T;\bm{L}^2(\Gamma_n))$, $\psi\in H^1(0,T;L^2(\Omega))$ and $g\in L^2(\Gamma_c)$.\\

At time $t=0$, given initial data $p_0\in Q$ and $\bu_0 \in \bm{V}$, set $p^h_0=p_0^I$ and $\bu_0^h=\bu_0^I$. For any $n \in \{1,2,..,N\}$, given $\bm{u}_{n-1}^h, p_{n-1}^h$, and given $\bm{\eta}\in \bm{V}^h$, find $\bm{u}_n^h \in \bm{V}^h$ and $p_n^h \in Q^h$ such that:
\begin{equation} \label{eq:discrete_mechanics_simple}
\begin{aligned}
    \left(2G\bm{\varepsilon}(\bm{u}_n^h)+\gamma\bm{\varepsilon}(\frunh),\bm{\varepsilon}(\bm{v}^h-
    \frunh)\right)&+\left(\lambda\nabla \cdot \bm{u}_n^h+\gamma\nabla \cdot \frunh-\alpha p_n^h,\nabla \cdot (\bm{v}^h-\frunh)\right) \\
    &+J[\bm{\eta},\bm{v}^h]-J[\bm{\eta},\frunh] \geq \left\langle\bm{f}_n^*,\bm{v}^h-\frunh\right\rangle \ \ \forall \bm{v}^h \in \bm{V}^h.
\end{aligned}
\end{equation}
\begin{equation} \label{eq:discrete_biot_again}
    \left( c_0\frpnh+\alpha\nabla \cdot \frunh, q^h\right)+\frac{1}{\mu}\left(\bk\nabla p_n^h,\nabla q^h\right)=\left(\psi_n,q^h\right) \ \ \forall q^h \in Q^h,
\end{equation}
\end{simpleproblem}
In other words, we keep the discrete mass balance equation \eqref{eq:discrete_biot}, but replace the discrete variational inequality \eqref{eq:discrete_mechanics} by a simpler version in which the first argument of the $J$-functional is a known, given function $\bm{\eta} \in \bm{V}^h$. This corresponds to the normal contact traction being prescribed on $\Gamma_c$, leading to the so-called Tresca friction model, which is much easier to analyze \cite{chouly2023finite,kikuchi_oden}. The function $\bm{\eta}$ may be chosen differently for different time steps, but for simplicity, we omit the subscript $n$. Note that we have substituted the definition of the bilinear forms \eqref{eq:bilinear_form_1}, \eqref{eq:bilinear_form_2} here, which is convenient when proving the contraction of the upcoming fixed-stress splitting scheme.

% Keep the discrete mass balance equation \eqref{eq:discrete_biot}, but replace the discrete variational inequality \eqref{eq:discrete_mechanics} by
% \begin{equation} \label{eq:discrete_mechanics_simple}
% \begin{aligned}
%     (2G\bm{\varepsilon}(\bm{u}_n^h)+\gamma\bm{\varepsilon}(\frac{\delta\bm{u}_n^h}{\Delta t}),\bm{\varepsilon}(\bm{v}^h-\frac{\delta \bm{u}^h_n}{\Delta t}))&+(\lambda(\nabla \cdot \bm{u}_n^h)+\gamma(\nabla \cdot (\frac{\delta \bm{u}_n^h}{\Delta t}))-\alpha p_n^h,\nabla \cdot (\bm{v}^h-\frac{\delta \bm{u}_n^h}{\Delta t})) \\
%     &+J[\bm{\eta},\bm{v}^h]-J[\bm{\eta},\frac{\delta\bm{u}_n^h}{\Delta t}] \geq \langle\bm{f}_n^*,\bm{v}^h-\frac{\delta\bm{u}_n^h}{\Delta t}\rangle \ \ \forall \bm{v}^h \in \bm{V}^h,
% \end{aligned}
% \end{equation}
% for some given $\bm{\eta} \in \bm{V}^h$. 

The fixed-stress splitting scheme is a well-known algorithm for solving problems of coupled flow and mechanics, by decoupling the flow and mechanics parts and iterating between the two problems \cite{kim2011stability,mikelic2013convergence}. The convergence of the scheme for the fully discrete Biot equations has been shown by Almani et al. \cite{almani2017convergence,almani2016multi}, and a recent paper by Almani and Kumar \cite{fixed_stress_contact} extended the convergence analysis to also include frictionless Signorini contact conditions imposed on parts of the boundary. The fixed-stress splitting scheme developed herein, and its subsequent analysis, closely follows \cite{fixed_stress_contact}.

We use the subscript $k$ to denote the iteration of the splitting scheme. The fixed-stress splitting scheme for problem (Q-D-S) reads:

\textbf{Step 1: Flow}. Given $p_{n,k}^h\in Q^h$ and $\bu_{n,k}^h \in \bm{V}^h$, find $p^h_{n,k+1} \in Q^h$ such that
\begin{equation} \label{eq:fixed_stress_biot}
\begin{aligned}
    \forall q^h \in Q^h \ , \ \left( (c_0+\frac{\alpha^2}{\lambda})\delta_tp_{n,k+1}^h+\alpha\nabla \cdot \delta_t\bu_{n,k}^h, q^h\right)&+\frac{1}{\mu}\left(\bk\nabla p_{n,k+1}^h,\nabla q^h\right)\\
    &=\left(\frac{\alpha^2}{\lambda}\delta_tp_{n,k}^h,q^h\right)+\left(\psi_n,q^h\right),
\end{aligned}
\end{equation}

\textbf{Step 2: Mechanics}. Given $\bm{\eta} \in \bm{V}^h$ and given $p_{n,k+1}^h$ from step 1, find $\bm{u}^h_{n,k+1} \in \bm{V}^h$ such that
\begin{equation} \label{eq:fixed_stress_mechanics}
\begin{aligned}
    \forall \bm{v}^h \in \bm{V}^h \ , \ &\left(2G\bm{\varepsilon}(\bm{u}^h_{n,k+1})+\gamma\bm{\varepsilon}(\delta_t\bu_{n,k+1}^h),\bm{\varepsilon}(\bm{v}^h-\delta_t\bu_{n,k+1}^h)\right)\\
    &+\left(\lambda\nabla \cdot \bm{u}^h_{n,k+1}+\gamma\nabla \cdot \delta_t\bu_{n,k+1}^h-\alpha p^h_{n,k+1},\nabla \cdot (\bm{v}^h-\delta_t\bu_{n,k+1}^h)\right)\\
    &\ \ \ \ \ \ \ \ \ \ \ \ \ \ \ \ \ \ \ \ +J[\bm{\eta},\bm{v}^h]-J[\bm{\eta},\delta_t\bu_{n,k+1}^h] \geq \left\langle\bm{f}_n^*,\bm{v}^h-\delta_t\bu_{n,k+1}^h\right\rangle,
\end{aligned}
\end{equation}
and the scheme is initialized for time step $n\geq1$ by setting $p_{n,0}^h=p_{n-1}^h$ and $\bm{u}_{n,0}^h=\bm{u}_{n-1}^h$. As is standard for the fixed-stress splitting scheme, the stabilization term $\alpha^2\lambda^{-1}\delta_tp_{n,k+1}^h$ has been added to the left-hand side of equation \eqref{eq:fixed_stress_biot}, with a similar term on the right-hand side for consistency.

\begin{lmm} \label{lemma:fixed_stress_existence}
    There exists a unique solution to each step of the splitting scheme.
\end{lmm}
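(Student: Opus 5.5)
Both steps will be treated separately. In each, the known quantities ($p_{n-1}^h$, $\bu_{n-1}^h$, $p_{n,k}^h$, $\bu_{n,k}^h$, $\bm{\eta}$) are moved to the right-hand side, and a standard finite-dimensional existence result is invoked.

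\textbf{Step 1 (flow).} Multiplying \eqref{eq:fixed_stress_biot} by $\Delta t$ gives a linear problem for $p_{n,k+1}^h\in Q^h$. Its bilinear form is
\begin{equation*}
a(p,q)=\left((c_0+\tfrac{\alpha^2}{\lambda})p,q\right)+\frac{\Delta t}{\mu}\left(\bk\nabla p,\nabla q\right).
\end{equation*}
The right-hand side is a bounded linear functional on $Q^h$ assembled from $p_{n-1}^h$, $\delta_t\bu^h_{n,k}$, $\delta_t p^h_{n,k}$ and $\psi_n$. The form $a$ is symmetric and bounded. It is coercive on $Q$ because $\bk$ is uniformly positive definite and Poincaré's inequality \eqref{eq:poincare_inequality} holds; in fact the $L^2$ term alone already gives coercivity, with constant at least $\min(c_0,\Delta t k_{\min}/\mu)$. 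The Lax--Milgram lemma applied on the closed subspace $Q^h$ (equivalently, the square linear system has a positive definite matrix) then yields a unique $p_{n,k+1}^h$.

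\textbf{Step 2 (mechanics).} Write $\bm{w}=\delta_t\bu_{n,k+1}^h$ as the unknown, so that $\bu^h_{n,k+1}=\bu_{n-1}^h+\Delta t\,\bm{w}$. Then \eqref{eq:fixed_stress_mechanics} becomes an elliptic variational inequality of the second kind: find $\bm{w}\in\bm{V}^h$ such that
\begin{equation*}
\mathcal{B}[\bm{w},\bm{v}^h-\bm{w}]+j(\bm{v}^h)-j(\bm{w})\geq \langle \ell,\bm{v}^h-\bm{w}\rangle \quad \forall \bm{v}^h\in\bm{V}^h,
\end{equation*}
with
\begin{equation*}
\mathcal{B}[\bm{w},\bm{v}]=\Delta t\,A[\bm{w},\bm{v}]+B[\bm{w},\bm{v}], \qquad j(\bm{v})=J[\bm{\eta},\bm{v}].
\end{equation*}
The functional $\ell$ collects $\bm{f}_n^*$, the term $-A[\bu_{n-1}^h,\cdot\,]$ and $\alpha(p^h_{n,k+1},\nabla\cdot\,\cdot\,)$. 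The form $\mathcal{B}$ is symmetric, continuous and coercive by \eqref{eq:continuity_bilinear} and \eqref{eq:v-ellipticity}. The functional $j$ is convex, because $(\eta_\nu-g)_+\geq0$ and both $\bm{v}\mapsto v_\nu$ and $\bm{v}\mapsto|\bm{v}_\tau|$ are convex, the former being linear. It is also proper and Lipschitz continuous on $\bm{V}$ by the trace inequality \eqref{eq:trace_inequality}; this uses that $(\eta_\nu-g)_+\in L^2(\Gamma_c)$, which holds since $g\in L^2(\Gamma_c)$.

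Given these properties, the classical theorem for elliptic variational inequalities of the second kind (Glowinski--Lions--Trémolières, or Theorem 4.1-type results in \cite{han2002quasistatic}) yields a unique solution. Concretely, by symmetry the inequality is equivalent to minimizing the strictly convex, coercive, continuous functional $\tfrac12\mathcal{B}[\bm{w},\bm{w}]+j(\bm{w})-\langle\ell,\bm{w}\rangle$ over the finite-dimensional space $\bm{V}^h$. A minimizer exists by coercivity and continuity, and it is unique by strict convexity.

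Uniqueness can also be checked directly. Take two solutions $\bm{w}_1,\bm{w}_2$, test the inequality for each with the other, and add. The $j$ terms cancel, leaving $\mathcal{B}[\bm{w}_1-\bm{w}_2,\bm{w}_1-\bm{w}_2]\leq 0$, hence $\bm{w}_1=\bm{w}_2$. Finally $\bu_{n,k+1}^h$ is recovered from $\bm{w}$.

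The only point needing care is the change of unknown to $\delta_t\bu$ and the verification that $j$ is convex. The latter relies on the nonnegativity of $(\eta_\nu-g)_+$ in the normal compliance term; the normal term $J_\nu$ is linear in $\bm{v}$, so it poses no difficulty. Everything else is routine.
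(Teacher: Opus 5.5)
Your proof is correct and follows essentially the paper's route. In Step 1 you solve a square linear system with a positive definite form; the paper instead tests the homogeneous problem with $q^h=p^h_{n,k+1}$, which is equivalent. In Step 2 you use the theory of elliptic variational inequalities of the second kind, which the paper simply cites from \cite{han2002quasistatic}, and your explicit change of unknown to $\delta_t\bu^h_{n,k+1}$ (with bilinear form $\Delta t\,A+B$) together with the direct uniqueness check makes precise a step the paper leaves implicit.
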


\begin{proof}
The first step, \eqref{eq:fixed_stress_biot}, corresponds to a square system of linear equations in finite dimension. Hence, the existence and uniqueness of the solution to \eqref{eq:fixed_stress_biot} follows by showing that if all data are zero (including data from the previous time step and splitting scheme iteration), then the only solution is the zero solution. This can be shown by testing \eqref{eq:fixed_stress_biot} with $q^h=p^h_{n,k+1}$, resulting in, when all data are zero:
\begin{equation}
    (c_0+\frac{\alpha^2}{\lambda}\frac{1}{\Delta t})\lVert p_{n,k+1}^h \rVert_{L^2(\Omega)}^2+\frac{1}{\mu} \lVert \bk^{1/2}\nabla p_{n,k+1}^h\rVert_{L^2(\Omega)}^2=0,
\end{equation}
from which existence and uniqueness follow. For the second step, we note that, since the pressure is fixed during this step, \eqref{eq:fixed_stress_mechanics} is a standard variational inequality of the second kind, as defined in chapter 4 of \cite{han2002quasistatic}. Hence, the existence of a unique solution to \eqref{eq:fixed_stress_mechanics} is inferred from a result in \cite{han2002quasistatic}, which states that a variational inequality of the second kind has a unique solution if the bilinear forms are continuous and coercive, and if the $J$-functional is proper, convex and lower semi-continuous. The assumptions on the bilinear forms hold (recall \eqref{eq:continuity_bilinear}, \eqref{eq:v-ellipticity}), and the assumptions on the $J$-functional are also well-
known to hold given our definition \eqref{eq:j_functional} of this functional \cite{han2002quasistatic}. The existence and uniqueness of the solution follows.
\end{proof}

We proceed to derive an estimate on the differences in iterations of the splitting scheme, which will lead to a contraction result. We shall denote differences in iterations of the splitting scheme by $\telta$, to distinguish it from the notation $\delta_t$ used to denote differences between time steps. Hence, for some variable $v^h_{n,k}$, depending on the time step $t_n$, the mesh $\mathcal{T}^h$ and the splitting scheme iteration $k$, we define:
\begin{equation} \label{eq:difference_iterates_splitting}
    \telta v^h_{n,k}=v^h_{n,k}-v^h_{n,k-1}.
\end{equation}
Following standard nomenclature, we define the volumetric mean stress
\begin{equation} \label{eq:mean_stress}
    \sigma^{v}_{n,k}=\lambda\nabla \cdot \bu_{n,k}^h-\alpha p_{n,k}^h,
\end{equation}
and its difference between fixed-stress iterations is accordingly
\begin{equation} \label{eq:mean_stress_diff}
\telta\sigma^{v}_{n,k}=\lambda\nabla \cdot \telta\bm{u}^h_{n,k}-\alpha\telta p^h_{n,k}.
\end{equation}
For notational convenience, the following constant is defined:
\begin{equation} \label{eq:beta_constant}
    \beta=\frac{c_0}{\alpha^2}+\frac{1}{\lambda}.
\end{equation}
\begin{lmm} \label{lemma:fixed_stress_contraction}
    The iterations of the fixed-stress splitting scheme \eqref{eq:fixed_stress_biot}, \eqref{eq:fixed_stress_mechanics} satisfy the following estimate:
    \begin{equation} \label{eq:contraction_estimate}
\begin{aligned}
    &\left(4G\lambda+\frac{2\gamma\lambda}{\Delta t}\right)\lVert \bm{\varepsilon}(\telta \bu^h_{n,k+1})\rVert_{L^2(\Omega)}^2+\left(\lambda^2+\frac{2\gamma\lambda}{\Delta t}\right)\lVert \nabla \cdot \telta \bm{u}^h_{n,k+1}\rVert_{L^2(\Omega)}^2\\
    &+\frac{2\Delta t}{\beta\mu}\lVert \bm{K}^{1/2}\nabla\telta p_{n,k+1}^h\rVert_{L^2(\Omega)}^2+\lVert \telta \sigma_{n,k+1}^v\rVert_{L^2(\Omega)}^2\leq \frac{1}{(\beta\lambda)^2}\lVert \telta \sigma^{v}_{n,k}\rVert_{L^2(\Omega)}^2.
\end{aligned}
\end{equation}
\end{lmm}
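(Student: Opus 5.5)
The plan follows the standard fixed-stress contraction argument of Almani and Kumar, adapted to the variational-inequality mechanics step. Work with two consecutive iterations $k$ and $k+1$ at the same time step $n$. All data, including $\bu_{n-1}^h$, $p_{n-1}^h$, $\bm{f}_n^*$, $\psi_n$ and $\bm{\eta}$, are common to both, so they cancel in differences. Note that $\delta_t$ applied to an iteration difference reduces to division by $\Delta t$: $\delta_t\telta\bu^h_{n,k+1}=\telta\bu^h_{n,k+1}/\Delta t$, and likewise for the pressure.

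\textbf{Mechanics step.} Write the inequality \eqref{eq:fixed_stress_mechanics} at iteration $k+1$ with test function $\bv^h=\delta_t\bu^h_{n,k}$. Write it at iteration $k$ with $\bv^h=\delta_t\bu^h_{n,k+1}$, and add the two. Since $\bm{\eta}$ is the same in both, the $J$-terms cancel exactly, and so do the right-hand sides. What remains is
\[
\Big(2G\bm{\varepsilon}(\telta\bu^h_{n,k+1})+\tfrac{\gamma}{\Delta t}\bm{\varepsilon}(\telta\bu^h_{n,k+1}),\bm{\varepsilon}(\telta\bu^h_{n,k+1})\Big)+\Big(\lambda\nabla\cdot\telta\bu^h_{n,k+1}+\tfrac{\gamma}{\Delta t}\nabla\cdot\telta\bu^h_{n,k+1}-\alpha\telta p^h_{n,k+1},\nabla\cdot\telta\bu^h_{n,k+1}\Big)\le 0,
\]
after multiplying by $\Delta t$. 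Multiply by $2\lambda$. The $\lambda\nabla\cdot$ and $-\alpha\telta p$ terms combine into $2(\telta\sigma^v_{n,k+1},\lambda\nabla\cdot\telta\bu^h_{n,k+1})$. Using $2(a,b)=\|a\|^2+\|b\|^2-\|a-b\|^2$ with $a=\telta\sigma^v_{n,k+1}$, $b=\lambda\nabla\cdot\telta\bu^h_{n,k+1}$ and $a-b=-\alpha\telta p^h_{n,k+1}$, this becomes
\[
\|\telta\sigma^v_{n,k+1}\|^2+\lambda^2\|\nabla\cdot\telta\bu^h_{n,k+1}\|^2-\alpha^2\|\telta p^h_{n,k+1}\|^2.
\]
This yields the left-hand side terms of \eqref{eq:contraction_estimate}, apart from the flux term, bounded by $\alpha^2\|\telta p^h_{n,k+1}\|^2$.

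\textbf{Flow step.} Subtract \eqref{eq:fixed_stress_biot} at iterations $k+1$ and $k$, and multiply by $\Delta t$. This gives, for all $q^h\in Q^h$,
\[
\big((c_0+\tfrac{\alpha^2}{\lambda})\telta p^h_{n,k+1},q^h\big)+\tfrac{\Delta t}{\mu}(\bk\nabla\telta p^h_{n,k+1},\nabla q^h)=\big(\tfrac{\alpha^2}{\lambda}\telta p^h_{n,k}-\alpha\nabla\cdot\telta\bu^h_{n,k},q^h\big).
\]
The right-hand side equals $-\frac{\alpha}{\lambda}(\telta\sigma^v_{n,k},q^h)$, and $c_0+\alpha^2/\lambda=\alpha^2\beta$. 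Test with $q^h=\telta p^h_{n,k+1}$, apply Young's inequality with weight so that the $\alpha^2\beta\|\telta p\|^2$ term is partially absorbed, and multiply by $2/\beta$. This should give
\[
\alpha^2\|\telta p^h_{n,k+1}\|^2+\tfrac{2\Delta t}{\beta\mu}\|\bk^{1/2}\nabla\telta p^h_{n,k+1}\|^2\le\tfrac{1}{(\beta\lambda)^2}\|\telta\sigma^v_{n,k}\|^2.
\]
Concretely, bound $\frac{2\alpha}{\beta\lambda}\|\sigma\|\|p\|\le\alpha^2\|p\|^2+\frac{1}{\beta^2\lambda^2}\|\sigma\|^2$, so that $2\alpha^2\|p\|^2-\alpha^2\|p\|^2$ remains.

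\textbf{Combine.} Add the flow estimate to the mechanics estimate; the $\alpha^2\|\telta p^h_{n,k+1}\|^2$ terms cancel, giving \eqref{eq:contraction_estimate}. The main obstacle, and the only real deviation from the linear case, is the mechanics step: the solution of a variational inequality cannot be differenced directly. This is handled by the cross-testing above, which works precisely because $\bm{\eta}$ is frozen, so the $J$-terms cancel identically. The remaining care is bookkeeping of constants, so that the Young splitting produces exactly the coefficients $2\Delta t/(\beta\mu)$ and $1/(\beta\lambda)^2$.
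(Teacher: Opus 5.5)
Your proposal is correct and follows essentially the same route as the paper. You cross-test the mechanics inequalities at iterations $k$ and $k+1$ so that the frozen-$\bm{\eta}$ $J$-terms cancel, test the differenced flow equation with $\telta p^h_{n,k+1}$ and apply Young's inequality with weight $\beta$, then multiply the mechanics estimate by $2\lambda$ and add. The only difference is cosmetic: you use the polarization identity to produce $\lVert \telta \sigma^v_{n,k+1}\rVert^2$ in the mechanics estimate before adding, whereas the paper adds the two estimates first and then recognizes the completed square.
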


\begin{proof}
We start by computing the difference between the flow equations \eqref{eq:fixed_stress_biot} at iterations $k$ and $k+1$, testing with $q^h=\telta p_{n,k+1}^h$ in both cases. The result is, after multiplying both sides by $\Delta t$:
\begin{equation}
\begin{aligned}
    \beta\lVert \alpha\telta p^h_{n,k+1}\rVert^2_{L^2(\Omega)}+\frac{\Delta t}{\mu}\lVert \bk^{1/2}\nabla\telta p_{n,k+1}^h\rVert_{L^2(\Omega)}^2&=-\left(\frac{\alpha}{\lambda}\telta \sigma^{v}_{n,k},\telta p^h_{n,k+1}\right)\\
    &\leq\frac{\varepsilon}{2}\lVert \alpha \telta p^h_{n,k+1}\rVert_{L^2(\Omega)}^2+\frac{1}{2\varepsilon \lambda^2}\rVert \telta \sigma^{v}_{n,k}\rVert_{L^2(\Omega)}^2,
\end{aligned}
\end{equation}
where the Cauchy-Schwarz and Young's inequalities have been used to obtain the second line. We choose $\varepsilon=\beta$ to get
\begin{equation} \label{eq:fixed_stress_estimate_biot}
    \lVert \alpha\telta p^h_{n,k+1}\rVert^2_{L^2(\Omega)}+\frac{2\Delta t}{\beta\mu}\lVert \bk^{1/2}\nabla\telta p_{n,k+1}^h\rVert_{L^2(\Omega)}^2\leq \frac{1}{(\beta \lambda)^2}\lVert \telta \sigma^{v}_{n,k}\rVert_{L^2(\Omega)}^2.
\end{equation}
Next, we consider the variational inequality \eqref{eq:fixed_stress_mechanics}. In this case, we test \eqref{eq:fixed_stress_mechanics} for iteration $k$ with $\bm{v}^h=\delta_t\bu_{n,k+1}^h$, and for iteration $k+1$ with $\bm{v}^h=\delta_t\bu_{n,k}^h$, and add the two inequalities. Since the first arguments of the $J$-functionals are fixed, this choice of test functions causes the $J$-functionals to cancel when the inequalities are added. The resulting estimate becomes:
\begin{equation} \label{eq:fixed_stress_estimate_mechanics_almost}
\begin{aligned}
    &\left(2G+\frac{\gamma}{\Delta t}\right)\lVert \bm{\varepsilon}(\telta \bm{u}^h_{n,k+1})\rVert_{L^2(\Omega)}^2+\left(\lambda+\frac{\gamma}{\Delta t}\right)\lVert \nabla \cdot \telta \bm{u}^h_{n,k+1}\rVert_{L^2(\Omega)}^2-\alpha(\telta p^h_{n,k+1},\nabla \cdot \telta \bm{u}^h_{n,k+1})\leq 0.
\end{aligned}
\end{equation}
Note that the inequality sign has been switched. We multiply both sides of \eqref{eq:fixed_stress_estimate_mechanics_almost} by $2\lambda$, which will enable us to complete the square on the coupling term, and add it to \eqref{eq:fixed_stress_estimate_biot} to obtain:
% \begin{equation} \label{eq:fixed_stress_estimate_mechanics}
% \begin{aligned}
%     &(4G\lambda+\frac{2\gamma\lambda}{\Delta t})\lVert \bm{\varepsilon}(\telta \bm{u}^h_{n,k+1})\rVert_{L^2(\Omega)}^2+(2\lambda^2+\frac{2\gamma\lambda}{\Delta t})\lVert \nabla \cdot \telta \bm{u}^h_{n,k+1}\rVert_{L^2(\Omega)}^2-2\alpha\lambda(\telta p^h_{n,k+1},\nabla \cdot \telta \bm{u}^h_{n,k+1})\leq 0.
% \end{aligned}
% \end{equation}
% Adding \eqref{eq:fixed_stress_estimate_biot} and \eqref{eq:fixed_stress_estimate_mechanics}, we obtain:
\begin{equation} \label{eq:fixed_stress_full_estimate1}
\begin{aligned}
    &\left(4G\lambda+\frac{2\gamma\lambda}{\Delta t}\right)\lVert \bm{\varepsilon}(\telta \bu^h_{n,k+1})\rVert_{L^2(\Omega)}^2+\left(\lambda^2+\frac{2\gamma\lambda}{\Delta t}\right)\lVert \nabla \cdot \telta \bm{u}^h_{n,k+1}\rVert_{L^2(\Omega)}^2+\frac{2\Delta t}{\beta\mu}\lVert \bm{K}^{1/2}\nabla\telta p_{n,k+1}^h\rVert_{L^2(\Omega)}^2\\
    &+\left\{\lVert \alpha \telta p^h_{n,k+1} \rVert_{L^2(\Omega)}^2-2\alpha\lambda(\telta p^h_{n,k+1},\nabla \cdot \telta \bm{u}^h_{n,k+1})+\lambda^2\lVert \nabla \cdot \telta \bm{u}^h_{n,k+1}\rVert_{L^2(\Omega)}^2 \right\}\leq \frac{1}{(\beta \lambda)^2}\lVert \telta \sigma^{v}_{n,k}\rVert_{L^2(\Omega)}^2.
\end{aligned}
\end{equation}
We recognize the term inside the curly brackets as $\lVert \telta \sigma_{n,k+1}^v\rVert_{L^2(\Omega)}^2$, and the estimate \eqref{eq:contraction_estimate} follows.
\end{proof}
Next, we establish convergence of the sequences generated by the fixed-stress splitting scheme, and prove that the limit functions solve the simplified problem (Q-D-S).

\begin{lmm} \label{lemma:sol_simple}
There exist limit functions $p_n^h\in Q^h$ and $\bu_n^h \in \bm{V}^h$ such that $p_{n,k}^h\to p_n^h$ and $\bu_{n,k}^h\to \bu_n^h$, where $p_{n,k}^h$ and $\bu_{n,k}^h$ are the iterations of the fixed-stress splitting scheme \eqref{eq:fixed_stress_biot}, \eqref{eq:fixed_stress_mechanics}. Moreover, the pair $(\bu_n^h, p_n^h)$ is a solution to problem (Q-D-S) at time step $n$.
\end{lmm}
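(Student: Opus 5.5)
The plan is to use the contraction estimate of Lemma \ref{lemma:fixed_stress_contraction} to show that the iterates form Cauchy sequences in the finite-dimensional spaces $\bm{V}^h$ and $Q^h$. Once that is done, we pass to the limit in each step of the splitting scheme.

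\textbf{Contraction of the mean stress.} Since $\beta\lambda = \frac{c_0\lambda}{\alpha^2}+1>1$ (because $c_0>0$), the factor $L:=(\beta\lambda)^{-2}<1$. Dropping the nonnegative terms on the left of \eqref{eq:contraction_estimate} gives
\[
\lVert \telta\sigma^v_{n,k+1}\rVert_{L^2(\Omega)}^2 \leq L\,\lVert \telta\sigma^v_{n,k}\rVert_{L^2(\Omega)}^2,
\qquad\text{hence}\qquad
\lVert \telta\sigma^v_{n,k}\rVert_{L^2(\Omega)}^2 \leq L^{k-1}\lVert \telta\sigma^v_{n,1}\rVert_{L^2(\Omega)}^2 .
\]

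\textbf{Geometric decay of the displacement and pressure increments.} Insert this bound back into \eqref{eq:contraction_estimate}. Combining Korn's inequality \eqref{eq:korn_inequality} with Poincaré gives
\[
\lVert \telta \bu^h_{n,k+1}\rVert_{H^1(\Omega)}^2 \leq C L^{k}.
\]
For the pressure, use \eqref{eq:fixed_stress_estimate_biot}, which gives $\lVert \alpha\telta p^h_{n,k+1}\rVert^2 \leq C L^k$, or equivalently the gradient term together with Poincaré on $Q$. Summing the square roots, which decay like $L^{k/2}$, shows that $\{\bu^h_{n,k}\}_k$ and $\{p^h_{n,k}\}_k$ are Cauchy in $H^1$. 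The spaces $\bm{V}^h$ and $Q^h$ are finite dimensional, hence complete, so there are limits $\bu_n^h\in\bm{V}^h$ and $p_n^h\in Q^h$. The constant $C$ may depend on $\Delta t$, $h$ and the first iterate; this is harmless because only convergence at a fixed time step is needed.

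\textbf{Passing to the limit.} Fix $q^h$ and let $k\to\infty$ in \eqref{eq:fixed_stress_biot}. Both $p^h_{n,k}$ and $p^h_{n,k+1}$ tend to $p_n^h$, so the stabilization terms $\frac{\alpha^2}{\lambda}\delta_t p$ on the two sides cancel in the limit. The result is exactly \eqref{eq:discrete_biot_again}. Here we use that $\delta_t \bu^h_{n,k} = (\bu^h_{n,k}-\bu^h_{n-1})/\Delta t$ converges to $\frunh$ because $\bu^h_{n-1}$ is fixed.

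For \eqref{eq:fixed_stress_mechanics}, fix $\bv^h$. The bilinear terms and the coupling term $(\alpha p^h_{n,k+1},\nabla\cdot(\cdot))$ are continuous in $H^1$, and the right-hand side is continuous. The term $J[\bm{\eta},\cdot]$ is also continuous in its second argument: for fixed $\bm{\eta}$ it is Lipschitz, since $\lvert J[\bm\eta,\bv_1]-J[\bm\eta,\bv_2]\rvert \leq (1+\mu_{\text{fr}})c_p\lVert(\eta_\nu-g)_+\rVert_{L^2(\Gamma_c)}C_\tau\lVert \bv_1-\bv_2\rVert_{H^1(\Omega)}$. Therefore the nonstrict inequality is preserved in the limit, and we obtain \eqref{eq:discrete_mechanics_simple}.

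The main obstacle is the second step: making sure the contraction of $\telta\sigma^v$ really controls the increments of both unknowns. This works because the left side of \eqref{eq:contraction_estimate} contains the full $\varepsilon$- and gradient norms. Beyond that, one only needs $c_0>0$ to get a strict contraction factor.
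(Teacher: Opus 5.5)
Your proposal is correct and follows essentially the same route as the paper. Both arguments take geometric decay from Lemma \ref{lemma:fixed_stress_contraction} using $\beta\lambda>1$, obtain Cauchy sequences via Korn/Poincaré, use completeness, and pass to the limit by linearity and continuity together with the Lipschitz continuity of $J[\bm{\eta},\cdot]$ in its second argument; the differences are cosmetic: you invoke completeness of the finite-dimensional spaces directly rather than completeness of $L^2$/$H^1$ plus closedness of $Q^h$, $\bm{V}^h$, and you make explicit the cancellation of the stabilization terms in the flow step.
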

\begin{proof}
The preceding estimate, Lemma \ref{lemma:fixed_stress_contraction}, can be used to establish convergence of the fixed-stress iterations $p_{n,k}^h, \bu_{n,k}^h$ as $k\to\infty$. We start by proving the convergence of the pressure. Since it follows from the definition of $\beta$ that $\beta\lambda>1$, the estimate \eqref{eq:contraction_estimate} defines a contraction.
From this it follows that, in particular, $\nabla \telta p_{n,k}^h$ converges geometrically to zero, implying that $\nabla p_{n,k}^h$ is a Cauchy sequence in $L^2(\Omega)$, and by the Poincaré inequality \eqref{eq:poincare_inequality}, we have that $p_{n,k}^h$ is also a Cauchy sequence in $L^2(\Omega)$. By the completeness of this space, $p_{n,k}^h\to p_n^h \in L^2(\Omega)$, from which it follows that $\nabla p_{n,k}^h \to \nabla p_n^h\in L^2(\Omega)$. Hence, $p_n^h \in H^1(\Omega)$, and since $p_{n,k}^h\in Q^h$ for all $k$, and $Q^h$ is a closed subspace of $H^1(\Omega)$, we conclude that $p_n^h \in Q^h$. Similarly for the displacement, we have from \eqref{eq:contraction_estimate} that $\bm{\varepsilon}(\telta \bu_{n,k}^h)$ converges geometrically to zero, implying that $\bm{\varepsilon}(\bu_{n,k}^h)$ is a Cauchy sequence in $L^2(\Omega)$. By combining the Poincaré inequality \eqref{eq:poincare_inequality} and the Korn inequality \eqref{eq:korn_inequality}, this implies that $\bu_{n,k}^h$ is a Cauchy sequence in $\bm{H}^1(\Omega)$, and by completeness, $\bu_{n,k}^h\to \bu_n^h \in \bm{H}^1(\Omega)$. Since $\bu_{n,k}^h \in \bm{V}^h$ for all $k$, and $\bm{V}^h$ is a closed subspace of $\bm{H}^1(\Omega)$, we conclude that $\bu_n^h \in \bm{V}^h$.
%from which it follows that $\sigma^v_{n,k}$ and $\nabla \cdot \bu_{n,k}^h$ are Cauchy sequences in $L^2(\Omega)$. By completeness, both sequences converge to elements of this space, and it then follows from the definition \eqref{eq:mean_stress} of $\sigma_{n,k}^v$ that $p_{n,k}^h \to p_n^h\in L^2(\Omega)$. Moreover, the estimate \eqref{eq:contraction_estimate} implies that $\nabla p_{n,k}^h$ is a Cauchy sequence in $L^2(\Omega)$, hence the sequence converges to an element of this space. Together with the fact that $p_{n,k}^h \to p_n^h$, this implies that $\nabla p_{n,k}^h \to \nabla p_n^h \in L^2(\Omega)$.

Turning to problem (Q-D-S), it is straightforward to argue that $p_n^h$ and $\bu_n^h$ solve the discrete mass balance equation \eqref{eq:discrete_biot_again}, as this equation only involves linear, continuous operators. Hence, by letting $k \to \infty$ in step 1 of the splitting scheme \eqref{eq:fixed_stress_biot}, we immediately recover \eqref{eq:discrete_biot_again}. For the variational inequality \eqref{eq:fixed_stress_mechanics} of step 2 of the splitting scheme, the nonlinear contact term that depends on the solution (i.e. the second $J$-functional on the left-hand side) must be considered more carefully. By performing a similar estimate as done in Lemma \ref{lemma:friction_estimate}, one can show that the contact terms of the variational inequality \eqref{eq:discrete_mechanics_simple} of problem (Q-D-S) is obtained in the limit. For instance, we have for the friction term $J_{\tau}$:
\begin{equation}
\begin{aligned}
&\left \lvert J_{\tau}[\bm{\eta},\delta_t\bu_{n,k}^h]-J_{\tau}[\bm{\eta},\delta_t\bu_n^h] \right \rvert =\\
    &\left \lvert \int_{\Gamma_c}\mu_{\text{fr}} c_p(\bm{\eta}_{\nu}-g)_+\left\lvert (\delta_t\bu_{n,k}^h)_{\tau}\right\rvert ds-\int_{\Gamma_c}\mu_{\text{fr}} c_p(\bm{\eta}_{\nu}-g)_+\left\lvert (\delta_t \bu_n^h)_{\tau}\right\rvert ds\right \rvert\\
    &=\left \lvert \int_{\Gamma_c}\frac{\mu_{\text{fr}} c_p}{\Delta t}(\bm{\eta}_{\nu}-g)_+\left(\vphantom{\rule{0pt}{2.8ex}}\left\lvert (\bm{u}^h_{n,k}-\bm{u}^h_{n-1})_{\tau}\right\rvert-\left\lvert (\bm{u}^h_n-\bm{u}^h_{n-1})_{\tau}\right\rvert\right)\right \rvert ds\\
    &\leq \frac{\mu_{\text{fr}} c_p}{\Delta t}\lVert (\bm{\eta}_{\nu}-g)_+ \rVert_{L^2(\Gamma_c)}\lVert (\bm{u}^h_{n,k}-\bm{u}^h_n)_{\tau}\rVert_{L^2(\Gamma_c)},
\end{aligned}
\end{equation}
which goes to zero as $k \to \infty$, since $\bm{u}^h_{n,k}\to\bm{u}^h_n$. The same type of estimate can be done on the normal compliance term $\left \lvert J_{\nu}[\bm{\eta},\delta_t\bu_{n,k}^h]-J_{\nu}[\bm{\eta},\delta_t\bu_n^h] \right \rvert$. For the other terms of \eqref{eq:fixed_stress_mechanics}, convergence to the respective terms of \eqref{eq:discrete_mechanics_simple} follows from linearity and continuity. Hence, we conclude that $p_{n}^h,\bu_{n}^h$ solves problem (Q-D-S).
\end{proof}

Finally, we are in a position to prove the existence and uniqueness of a solution to the original problem (Q-D).
\begin{thrm} \label{thm:existence_discrete}
    % If one of the following conditions are fulfilled:

    % 1. $\alpha_A\geq C_{\tau}^2(1+\mu_{\text{fr}})c_p$,

    % 2. $\alpha_A<C_{\tau}^2(1+\mu_{\text{fr}})c_p$ and $\Delta t < \frac{\alpha_B}{C_{\tau}^2(1+\mu_{\text{fr}})c_p-\alpha_A}$,
    
    If $\alpha_A+\frac{\alpha_B}{\Delta t}>C_J$ (which can always be fulfilled by choosing a sufficiently small time step), then (Q-D) is guaranteed to have a unique solution. In other words, there exists for any $n\in \{1,2,...,N\}$ a unique pair $(\bu_n^h,p_n^h)\in \bm{V}^h\times Q^h$ that solves \eqref{eq:discrete_mechanics} and \eqref{eq:discrete_biot}.
\end{thrm}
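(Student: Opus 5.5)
We argue time step by time step; fix $n\ge1$ and assume $(\bu_{n-1}^h,p_{n-1}^h)$ is known. We use a Banach fixed-point argument on the lagged contact argument $\bm{\eta}$. Define $\Lambda:\bm{V}^h\to\bm{V}^h$ by $\Lambda(\bm{\eta})=\bu_n^h$, where $(\bu_n^h,p_n^h)$ solves (Q-D-S) with that $\bm{\eta}$. Lemma \ref{lemma:sol_simple} gives existence of such a solution. Uniqueness of the solution to (Q-D-S) is needed for $\Lambda$ to be well defined; it follows from the same difference estimate we use for the contraction, with $\bm{\eta}_1=\bm{\eta}_2$. A fixed point $\bu_n^h=\Lambda(\bu_n^h)$, together with its pressure, solves (Q-D), and conversely every solution of (Q-D) gives a fixed point of $\Lambda$. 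So existence and uniqueness for (Q-D) reduce to showing that $\Lambda$ is a contraction on the finite-dimensional, hence complete, space $\bm{V}^h$.

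For the contraction estimate, take $\bm{\eta}_1,\bm{\eta}_2$ with solutions $(\bu_i,p_i)$ at step $n$, both built from the same previous data. Set $\bm{w}=\bu_1-\bu_2$, $r=p_1-p_2$, and note $\delta_t\bu_1-\delta_t\bu_2=\bm{w}/\Delta t$.
\begin{itemize}
\item Test the inequality for $\bu_1$ with $\bv^h=\delta_t\bu_2$ and the one for $\bu_2$ with $\bv^h=\delta_t\bu_1$, then add. The elastic and viscous terms give
\[
\frac{1}{\Delta t}A[\bm{w},\bm{w}]+\frac{1}{\Delta t^2}B[\bm{w},\bm{w}].
\]
The pressure terms give $-\frac{\alpha}{\Delta t}(r,\nabla\cdot\bm{w})$.
\item The friction terms combine into $J[\bm{\eta}_1,\delta_t\bu_2]-J[\bm{\eta}_1,\delta_t\bu_1]+J[\bm{\eta}_2,\delta_t\bu_1]-J[\bm{\eta}_2,\delta_t\bu_2]$. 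By Lemma \ref{lemma:friction_estimate} this is bounded by $C_J\lVert\bm{\eta}_1-\bm{\eta}_2\rVert_{H^1(\Omega)}\lVert\bm{w}\rVert_{H^1(\Omega)}/\Delta t$.
\item Subtract the two mass balance equations \eqref{eq:discrete_biot_again} and test with $q^h=r$, multiplied by $\Delta t$. This gives
\[
\frac{c_0}{\Delta t}\lVert r\rVert^2_{L^2(\Omega)}+\frac{\alpha}{\Delta t}(\nabla\cdot\bm{w},r)+\frac1\mu\lVert\bk^{1/2}\nabla r\rVert^2_{L^2(\Omega)}=0 .
\]
\end{itemize}

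Adding this equation to the mechanics inequality cancels the coupling term exactly, and the pressure contributions are nonnegative. Multiplying by $\Delta t$ and using \eqref{eq:v-ellipticity} yields
\[
\Big(\alpha_A+\frac{\alpha_B}{\Delta t}\Big)\lVert\bm{w}\rVert_{H^1(\Omega)}^2\le C_J\lVert\bm{\eta}_1-\bm{\eta}_2\rVert_{H^1(\Omega)}\lVert\bm{w}\rVert_{H^1(\Omega)}.
\]
Hence $\lVert\Lambda\bm{\eta}_1-\Lambda\bm{\eta}_2\rVert_{H^1(\Omega)}\le \frac{C_J}{\alpha_A+\alpha_B/\Delta t}\lVert\bm{\eta}_1-\bm{\eta}_2\rVert_{H^1(\Omega)}$, which is a contraction under the hypothesis of the theorem. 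With $\bm{\eta}_1=\bm{\eta}_2$ the right-hand side vanishes, so $\bm{w}=0$. The remaining pressure terms then force $r=0$, since $c_0>0$. This gives the uniqueness needed for (Q-D-S).

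The main obstacle is this bookkeeping. The test functions must be chosen so the coupling term cancels against the flow equation, rather than requiring a fixed-stress style bound. The $J$-terms must also appear in exactly the combination covered by Lemma \ref{lemma:friction_estimate}, with the $1/\Delta t$ scaling matching that of the viscous term. Uniqueness for (Q-D) follows from the same computation with both $\bm{\eta}_i$ replaced by the respective solutions $\bu_i$. One then obtains $(\alpha_A+\alpha_B/\Delta t-C_J)\lVert\bm{w}\rVert^2_{H^1(\Omega)}\le0$, so $\bm{w}=0$ and then $r=0$. Induction over $n$ completes the proof.
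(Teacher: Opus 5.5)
Your proposal is correct and follows the paper's proof essentially step for step. It uses the same contraction map $\Lambda(\bm{\eta})$ defined through (Q-D-S), the same crossed test functions $\delta_t\bm{u}^{\bm{\eta}_2}_n$, $\delta_t\bm{u}^{\bm{\eta}_1}_n$ together with $q^h=\tilde{p}_n$ so that the coupling term cancels, the same application of Lemma \ref{lemma:friction_estimate} giving $(\alpha_A+\alpha_B/\Delta t)\lVert\tilde{\bm{u}}_n\rVert_{H^1(\Omega)}\le C_J\lVert\tilde{\bm{\eta}}\rVert_{H^1(\Omega)}$, and the same computation repeated for uniqueness of (Q-D). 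Your explicit check that (Q-D-S) has a unique solution, so that $\Lambda$ is well defined, is a small addition; the paper avoids needing it by defining $\Lambda$ through the limit of the fixed-stress iteration in Lemma \ref{lemma:sol_simple}.
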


\begin{proof}
The existence of a solution is proven by combining the solution to problem (Q-D-S) obtained in Lemma \ref{lemma:sol_simple} with a fixed-point argument from the contact mechanics literature; see for instance \cite{Han1999}. We introduce the fixed point operator $\Lambda:\bm{V}^h \to \bm{V}^h$, defined by
\begin{equation} \label{eq:fixed_point_operator}
    \Lambda(\bm{\eta})=\bm{u}^{\bm{\eta}}_n,
\end{equation}
where $\bm{u}^{\bm{\eta}}_n$ denotes the displacement solution to problem (Q-D-S) for a given function $\bm{\eta}$ (i.e. the limit of the sequence $\bu_{n,k}^h$ of the fixed-stress splitting scheme \eqref{eq:fixed_stress_biot}, \eqref{eq:fixed_stress_mechanics} as $k\to\infty$, as obtained in Lemma \ref{lemma:sol_simple}). We similarly denote the corresponding pressure solution by $p_n^{\bm{\eta}}$. For ease of notation, we drop the $h$ superscript in this proof. We will show that this operator defines a contraction. 

Consider two solutions $(\bm{u}^{\bm{\eta_1}}_n, p^{\bm{\eta_1}}_n)$ and $(\bm{u}^{\bm{\eta_2}}_n, p^{\bm{\eta_2}}_n)$ to problem (Q-D-S), corresponding to different functions $\bm{\eta}_1$ and $\bm{\eta}_2$, respectively. We use a tilde to denote the differences between these solutions; $\tilde{p}_n=p_n^{\bm{\eta}_1}-p_n^{\bm{\eta}_2}$, $\tilde{\bm{u}}_n=\bm{u}^{\bm{\eta}_1}_n-\bm{u}^{\bm{\eta}_2}_n$ and $\tilde{\bm{\eta}}=\bm{\eta}_1-\bm{\eta}_2$. The simplified variational inequality \eqref{eq:discrete_mechanics_simple} corresponding to the $\bm{u}^{\bm{\eta}_1}_n$-solution is tested with $\bm{v}^h=\delta_t\bm{u}^{\bm{\eta}_2}_n$, and the inequality corresponding to the $\bm{u}^{\bm{\eta}_2}_n$-solution is tested with $\bm{v}^h= \delta_t\bm{u}^{\bm{\eta}_1}_n$, and the resulting equations are added. The discrete mass balance equation \eqref{eq:discrete_biot_again} is tested with $q^h=\tilde{p}_n$ and the equations for each solution are subtracted. The resulting estimate becomes (we now revert back to the notation of the bilinear forms \eqref{eq:bilinear_form_1} and \eqref{eq:bilinear_form_2}):
\begin{equation} \label{eq:fpi_arg_1}
\begin{aligned}
    &A[\tilde{\bm{u}}_n, \tilde{\bm{u}}_n]+\frac{1}{\Delta t}B[\tilde{\bm{u}}_n, \tilde{\bm{u}}_n]+c_0\lVert \tilde{p}_n \rVert_{L^2(\Omega)}^2+\frac{\Delta t}{\mu}\lVert \bm{K}^{1/2}\nabla\tilde{p}_n\rVert_{L^2(\Omega)}^2\\
    &\leq J[\bm{\eta}_1,\bm{u}^{\bm{\eta}_2}_n-\bm{u}_{n-1}]
    -J[\bm{\eta}_1,\bm{u}^{\bm{\eta}_1}_n-\bm{u}_{n-1}]
    +J[\bm{\eta}_2,\bm{u}^{\bm{\eta}_1}_n-\bm{u}_{n-1}]
    -J[\bm{\eta}_2,\bm{u}^{\bm{\eta}_2}_n-\bm{u}_{n-1}]\\
    &\leq C_J\lVert \tilde{\bm{\eta}}\rVert_{H^1(\Omega)}\lVert \tilde{\bm{u}}_n\rVert_{H^1(\Omega)},
\end{aligned}
\end{equation}
where Lemma \ref{lemma:friction_estimate} has been used for the final inequality. Together with the coercivity conditions \eqref{eq:v-ellipticity}, this implies the estimate
\begin{equation} \label{eq:fpi_arg_3}
    (\alpha_A+\frac{\alpha_B}{\Delta t})\lVert \tilde{\bm{u}}_n\rVert_{H^1(\Omega)}^2 \leq C_J\lVert \tilde{\bm{\eta}}\rVert_{H^1(\Omega)}\lVert \tilde{\bm{u}}_n\rVert_{H^1(\Omega)},
\end{equation}
or equivalently,
\begin{equation} \label{eq:contact_contraction_estimate}
    \lVert \tilde{\bm{u}}_n\rVert_{H^1(\Omega)} \leq \frac{C_J}{ \alpha_A+\frac{\alpha_B}{\Delta t}}\lVert \tilde{\bm{\eta}} \rVert_{H^1(\Omega)},
\end{equation}
Hence, the operator $\Lambda$ \eqref{eq:fixed_point_operator} is a contraction, and consequently has a unique fixed point, if
\begin{equation} \label{eq:time_step_constraint_1}
   \alpha_A+\frac{\alpha_B}{\Delta t}>C_J. 
\end{equation}
Denote this fixed point by $\bm{\eta}^*$, and let $(\bm{u}^*_n, p^*_n)$ denote the corresponding solution to problem (Q-D-S) with $\bm{\eta}^*$ as the given function. Then $(\bm{u}^*_n, p^*_n)$ is also a solution to problem (Q-D). Indeed, the only difference between problem (Q-D) and problem (Q-D-S) is the variational inequality, but due to the fixed-point property $\bm{\eta}^*=\bm{u}^*_n$, the solution to the simplified variational inequality \eqref{eq:discrete_mechanics_simple} coincides with that of the full variational inequality \eqref{eq:discrete_mechanics}. We thus conclude that there exists a solution to problem (Q-D), given that the time step size fulfills the condition \eqref{eq:time_step_constraint_1}.

Finally, we prove the uniqueness of the solution to problem (Q-D). This proof is straightforward at this point, as it follows the exact same steps as the preceding proof of the contraction of the operator $\Lambda$. Assuming there are two solutions, $(\bm{u}_{n1}^h,p_{n1}^h)$ and $(\bm{u}_{n2}^h,p_{n2}^h)$, to the discrete equations \eqref{eq:discrete_mechanics}, \eqref{eq:discrete_biot}, denoting the differences between the solutions by tildes, and performing the same computations as before, \eqref{eq:fpi_arg_1}-\eqref{eq:fpi_arg_3}, we obtain the estimate
\begin{equation} \label{eq:uniqueness_estimate}
    \left(\alpha_A+\frac{\alpha_B}{\Delta t}\right)\lVert \tilu\rVert_{H^1(\Omega)}^2+c_0\lVert \tilp \rVert_{L^2(\Omega)}^2+\frac{\Delta t}{\mu}\lVert \bk^{1/2}\nabla \tilp \rVert_{L^2(\Omega)}^2 \leq C_J\lVert \tilu \rVert_{H^1(\Omega)}^2,
\end{equation}
from which the uniqueness follows, given the fulfillment of the same condition as before, \eqref{eq:time_step_constraint_1}. Theorem \ref{thm:existence_discrete} follows.
\end{proof}

\subsection{Stability estimate} \label{sec:stability_estimates}

In this section, we derive a stability estimate for the solution $(\bu_n^h,p_n^h)$ to the discrete equations \eqref{eq:discrete_mechanics}, \eqref{eq:discrete_biot}.
\begin{thrm}
If the time step size satisfies $\Delta t<2\alpha_A\alpha_B$, then the solution $(\bu_n^h,p_n^h)$ to problem (Q-D) satisfies the following stability bound:
\begin{equation} \label{eq:stability_estimate}
    \begin{aligned}
        &\lVert \bu_N^h \rVert_{H^1(\Omega)}^2+\lVert p_N^h \rVert_{L^2(\Omega)}^2+ \sum_{n=1}^N \Delta t \lVert \frunh \rVert_{H^1(\Omega)}^2+\sum_{n=1}^N \Delta t \lVert \bk^{1/2}\nabla p_n^h \rVert_{L^2(\Omega)}^2\leq C,
    \end{aligned}
\end{equation}
where $C$ depends on the final time $T$, but is independent of $n, h$ and $\Delta t$.
\end{thrm}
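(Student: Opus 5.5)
The plan is a discrete energy argument. Test \eqref{eq:discrete_mechanics} with $\bv^h=\bm{0}$ and \eqref{eq:discrete_biot} with $q^h=\Delta t\, p_n^h$. The variational inequality with $\bv^h=\bm{0}$ gives
\begin{equation*}
A[\bu_n^h,\frunh]+B[\frunh,\frunh]-\alpha(p_n^h,\nabla\cdot\frunh)+J[\bu_n^h,\frunh]\leq \langle \bm{f}_n^*,\frunh\rangle ,
\end{equation*}
using $J[\bu_n^h,\bm{0}]=0$. Here $J[\bu_n^h,\frunh]$ is \emph{not} sign-definite: $J_\tau\geq 0$, but $J_\nu[\bu_n^h,\frunh]=\int_{\Gamma_c}c_p(u^h_{n\nu}-g)_+(\delta_t u^h_{n\nu})\,ds$ can be negative. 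Multiply by $\Delta t$ and add to the tested mass balance. The coupling terms $-\alpha\Delta t(p_n^h,\nabla\cdot\frunh)$ and $+\alpha\Delta t(\nabla\cdot\frunh,p_n^h)$ cancel exactly.

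The remaining terms are handled in turn.
\begin{itemize}
\item For the elastic term, use $2A[\bu_n,\bu_n-\bu_{n-1}]=A[\bu_n,\bu_n]-A[\bu_{n-1},\bu_{n-1}]+A[\bu_n-\bu_{n-1},\bu_n-\bu_{n-1}]$, and likewise $2c_0(p_n-p_{n-1},p_n)$ for the storage term. This produces telescoping terms plus nonnegative remainders.
\item The viscous term gives $\Delta t\, B[\frunh,\frunh]\geq \alpha_B\Delta t\lVert\frunh\rVert_{H^1}^2$.
\item The Darcy term gives $\frac{\Delta t}{\mu}\lVert\bk^{1/2}\nabla p_n^h\rVert^2$.
\end{itemize}
On the right-hand side, bound $\Delta t\langle\bm{f}_n^*,\frunh\rangle$ by $\frac{\Delta t}{2\epsilon}\lVert\bm{f}_n^*\rVert_{\bm{V}^*}^2+\frac{\epsilon\Delta t}{2}\lVert\frunh\rVert_{H^1}^2$. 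Bound $\Delta t(\psi_n,p_n^h)$ using Poincaré and Young, absorbing into the Darcy term with a constant depending on the lower bound of $\bk$. The data norms $\Delta t\sum_n\lVert \bm{f}_n^*\rVert^2$ and $\Delta t\sum_n\lVert\psi_n\rVert^2$ are bounded because $\bm{f},\bm{b},\psi\in H^1(0,T;\cdot)\subset C([0,T];\cdot)$.

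The main obstacle is the normal compliance term $-\Delta t J_\nu[\bu_n^h,\frunh]$ on the right. I would not try to telescope it through the convex primitive $\frac{c_p}{2}(\cdot-g)_+^2$, since that sign structure is unhelpful here. Instead, bound it crudely using the trace inequality and $\lvert(u_\nu-g)_+\rvert\leq\lvert u_\nu\rvert+\lvert g\rvert$:
\begin{equation*}
\Delta t\, c_p\int_{\Gamma_c}(u^h_{n\nu}-g)_+\lvert\delta_t u^h_{n\nu}\rvert\,ds\leq \frac{\alpha_B\Delta t}{2}\lVert\frunh\rVert_{H^1}^2+C\Delta t\left(\lVert\bu_n^h\rVert_{H^1}^2+\lVert g\rVert_{L^2(\Gamma_c)}^2\right).
\end{equation*}
Absorb the first term into the viscous term, which is exactly where the viscoelastic damping is essential. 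The $\Delta t\lVert\bu_n^h\rVert^2_{H^1}$ term involves the current step, so a discrete Gronwall argument requires its coefficient to be dominated by $\alpha_A$. This is where the hypothesis $\Delta t<2\alpha_A\alpha_B$ should enter, after normalizing the Young weights so that the constant multiplying $\Delta t\lVert\bu_n^h\rVert^2$ is $1/(2\alpha_B)$ after scaling by $c_p$ and $C_\tau$. One can also write $\bu_n^h=\bu_0^h+\Delta t\sum_{m\leq n}\delta_t\bu_m^h$, but Gronwall is simpler.

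Summing over $n=1,\dots,N$ (in fact to any $n\le N$) and dropping the nonnegative remainder terms, we get
\begin{equation*}
\alpha_A\lVert\bu_n^h\rVert^2+c_0\lVert p_n^h\rVert^2+\sum_m\Delta t\left(\lVert\delta_t\bu_m^h\rVert^2+\lVert\bk^{1/2}\nabla p_m^h\rVert^2\right)\leq C_0+C\sum_{m\le n}\Delta t\lVert\bu_m^h\rVert^2_{H^1}.
\end{equation*}
The constant $C_0$ depends on $\lVert\bu_0^I\rVert_{H^1}$ and $\lVert p_0^I\rVert_{L^2}$, which are bounded by $\bu_0,p_0$ through \eqref{eq:interpolation_estimate}, and on the data and $g$. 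Apply the discrete Gronwall lemma, using the step restriction to move the $m=n$ term to the left. This gives \eqref{eq:stability_estimate} with $C\sim e^{CT}$, independent of $h$, $n$ and $\Delta t$. Existence of the iterates is guaranteed by Theorem \ref{thm:existence_discrete}.
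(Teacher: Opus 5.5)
Your proposal is correct and is essentially the paper's argument: test with $\bv^h=\bm{0}$ and $q^h=p_n^h$, cancel the coupling, telescope, bound the contact term via the trace and Young inequalities, absorb into the viscous term, and close with discrete Gr\"onwall; you differ only in dropping $J_\tau\geq 0$ instead of bounding it and in absorbing $(\psi_n,p_n^h)$ into the Darcy term via Poincar\'e rather than passing it through Gr\"onwall. Your aside dismissing the convex primitive is mistaken, though: with $\phi(s)=\frac{c_p}{2}\bigl((s-g)_+\bigr)^2$, convexity gives $\Delta t\,J_\nu[\bu_n^h,\frunh]\geq\int_{\Gamma_c}\bigl(\phi(u^h_{n\nu})-\phi(u^h_{n-1,\nu})\bigr)\,ds$, which telescopes with the favourable sign and yields the bound with no step restriction and no Gr\"onwall at all, whereas the crude route only gives $\Delta t<2\alpha_A\alpha_B/(c_pC_\tau^2)^2$ once $c_p$ and $C_\tau$ are tracked, so the normalization you tacitly invoke is not free.
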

\begin{proof}
We shall need the following discrete version of the chain rule, for some $v_n$ defined at the discrete time points $t_n$:
\begin{equation} \label{eq:discrete_chain_rule}
    (\delta_t v_n,v_n)=\frac{1}{2\Delta t}\left( \lVert v_n \rVert_{L^2(\Omega)}^2-\lVert v_{n-1}\rVert_{L^2(\Omega)}^2+\lVert v_n-v_{n-1}\rVert_{L^2(\Omega)}^2\right).
\end{equation}
The flow equation \eqref{eq:discrete_biot} is tested with $q^h=p_n^h$. Using the chain rule above, the obtain the following stability equality:
\begin{equation} \label{eq:stability_flow_1}
\begin{aligned}
    \frac{c_0}{2\Delta t}\left(\lVert p_n^h \rVert_{L^2(\Omega)}^2-\lVert p_{n-1}^h \rVert_{L^2(\Omega)}^2+\lVert  p_n^h-p_{n-1}^h \rVert_{L^2(\Omega)}^2\right)&+\alpha\left(p_n^h,\nabla \cdot \frunh\right)\\
    &+\frac{1}{\mu}\lVert \bk^{1/2}\nabla p_n^h \rVert_{L^2(\Omega)}^2=\left(\psi_n,p_n^h\right).
\end{aligned}
\end{equation}
Next, we test the variational inequality \eqref{eq:discrete_mechanics} with $\bv^h=\bm0$, and use the discrete chain rule \eqref{eq:discrete_chain_rule} on the bilinear form $A$, resulting in
\begin{equation} \label{eq:stability_mechanics_1}
    \begin{aligned}
        \frac{1}{2\Delta t}\left(A[\bu_n^h,\bu_n^h]\right.-A[\bu_{n-1}^h,\bu_{n-1}^h]&+A[\left. \bu_n^h-\bu_{n-1}^h, \bu_n^h-\bu_{n-1}^h]\right)+B[\frunh,\frunh]\\
        &\leq \alpha\left(p_n^h,\nabla \cdot \frunh\right)-J[\bu_n^h,\frunh]+\left\langle \bm{f}_n^*,\frunh\right\rangle.
    \end{aligned}
\end{equation}
Equations \eqref{eq:stability_flow_1} and \eqref{eq:stability_mechanics_1} are then added, and we get, noting that the coupling term cancels:
\begin{equation} \label{eq:stability_combined_1}
    \begin{aligned}
        &\frac{1}{2\Delta t}\left(A [\bu_n^h,\bu_n^h]-A[\bu_{n-1}^h,\bu_{n-1}^h]+A[ \bu_n^h-\bu_{n-1}^h, \bu_n^h-\bu_{n-1}^h]\right)+B[\frunh,\frunh]\\
        &+\frac{c_0}{2\Delta t}\left(\lVert p_n^h \rVert_{L^2(\Omega)}^2-\lVert p_{n-1}^h \rVert_{L^2(\Omega)}^2+\lVert p_n^h-p_{n-1}^h
        \rVert_{L^2(\Omega)}^2\right)\\
        &+\frac{1}{\mu}\lVert \bk^{1/2}\nabla p_n^h \rVert_{L^2(\Omega)}^2\leq \left(\psi_n,p_n^h\right)+\left\langle\bm{f}_n^*,\frunh\right\rangle-J[\bu_n^h,\frunh].
    \end{aligned}
\end{equation}
Next, we sum each side of \eqref{eq:stability_combined_1} from $n=1$ to $N$, multiply by $\Delta t$, and use the continuity \eqref{eq:continuity_bilinear} and coercivity \eqref{eq:v-ellipticity} of the bilinear forms to get
\begin{equation} \label{eq:stability_almost_final}
    \begin{aligned}
        &\frac{\alpha_A}{2}\left(\lVert\bu_N^h\rVert_{H^1(\Omega)}^2+\sum_{n=1}^N\lVert \bu_n^h-\bu_{n-1}^h\rVert_{H^1(\Omega)}^2\right)+\frac{c_0}{2}\left( \lVert p_N^h\rVert_{L^2(\Omega)}^2+\sum_{n=1}^N\lVert p_n^h-p_{n-1}^h \rVert_{L^2(\Omega)}^2\right)\\
        &+\alpha_B \sum_{n=1}^N \Delta t\lVert\frunh\rVert_{H^1(\Omega)}^2+\frac{1}{\mu}\sum_{n=1}^N\Delta t \lVert \bk^{1/2}\nabla p_n^h \rVert_{L^2(\Omega)}^2\\
        &\leq \frac{m_A}{2}\lVert\bu_0^h\rVert_{H^1(\Omega)}^2+\frac{c_0}{2}\lVert p_0^h \rVert_{L^2(\Omega)}^2 + \Delta t \sum_{n=1}^N\left( \left(\psi_n,p_n^h\right)+\left\langle \bm{f}_n^*,\frunh\right\rangle-J[\bu_n^h,\frunh]\right).
    \end{aligned}
\end{equation}
The first two terms inside the sum on the right-hand side are bounded by straightforward usage of the Cauchy-Schwarz and Young inequalities (for the second term, we additionally use the trace inequality on the resulting boundary integral $\lVert \delta_t \bu_n^h\rVert_{L^2(\Gamma_n)}$):
\begin{equation} \label{eq:stability_source_bounds_1}
    \left\lvert \left(\psi_n,p_n^h\right)\right\rvert \leq \varepsilon_1 \lVert p_n^h \rVert_{L^2(\Omega)}^2+C_1\rVert \psi_n \rVert_{L^2(\Omega)}^2,
\end{equation}
\begin{equation}
\label{eq:stability_source_bounds_2}
    \left\lvert \left\langle\bm{f}_n^*,\frunh\right\rangle\right\rvert \leq (\varepsilon_2+\varepsilon_3) \lVert \frunh \rVert_{H^1(\Omega)}^2 + C_2 \lVert \bm{f}_n \rVert_{L^2(\Omega)}^2+C_3\lVert \bm{b}_n \rVert_{L^2(\Gamma_n)}^2
\end{equation}
Here, $\varepsilon_i$ denote the constants of Young's inequality that are chosen freely, while the other constants are denoted by $C_i=C_i(\varepsilon_i)$.
For the nonlinear contact term on the right-hand side of \eqref{eq:stability_almost_final}, we first obtain from the Cauchy-Schwarz and triangle inequalities:
\begin{equation} \label{eq:stability_contact_1}
    \left\lvert J[\bu_n^h,\frunh]\right\rvert \leq c_p\lVert (\bu_{n\nu}^h-g)_+\rVert_{L^2(\Gamma_c)}\lVert (\frunh)_{\nu}\rVert_{L^2(\Gamma_c)}+\mu_{\text{fr}}c_p\lVert (\bu_{n\nu}^h-g)_+\rVert_{L^2(\Gamma_c)}\lVert (\frunh)_{\tau}\rVert_{L^2(\Gamma_c)}.
\end{equation}
Moreover, we have as an immediate consequence of the positive cut function $(\cdot)_+$ that 
\begin{equation}
    \lVert (\bu_{n\nu}^h-g)_+\rVert_{L^2(\Gamma_c)} \leq \lVert \bu_{n\nu}^h-g\rVert_{L^2(\Gamma_c)} \leq \lVert \bu_{n\nu}^h\rVert_{L^2(\Gamma_c)}+\lVert g \rVert_{L^2(\Gamma_c)}.
\end{equation}
Using this, in addition to the inequalities $\lVert \bv_n \rVert \leq \lVert \bv \rVert$ and $\lVert \bv_{\tau}\rVert \leq \lVert \bv \rVert$ (for any vector-valued function $\bv$ defined on $\Gamma_c$), we get from \eqref{eq:stability_contact_1} the bound:
\begin{equation} \label{eq:stability_contact_bound}
\begin{aligned}
   \left\lvert J[\bu_n^h,\frunh]\right\rvert &\leq C\left(\lVert \bu_n^h \rVert_{L^2(\Gamma_c)}+\lVert g \rVert_{L^2(\Gamma_c)}\right)\lVert \frunh \rVert_{L^2(\Gamma_c)} \\
   &\leq (\varepsilon_4+\varepsilon_5)\lVert \frunh \rVert_{H^1(\Omega)}^2+C_4\lVert \bu_n^h \rVert_{H^1(\Omega)}^2+C_5 \lVert g \rVert_{L^2(\Gamma_c)}^2,
\end{aligned}
\end{equation}
where for the last inequality, we have used the trace inequality, in addition to Young's inequality. The estimates \eqref{eq:stability_source_bounds_1}, \eqref{eq:stability_source_bounds_2} and \eqref{eq:stability_contact_bound} are substituted into \eqref{eq:stability_almost_final}, and we choose $\varepsilon_2+\varepsilon_3+\varepsilon_4+\varepsilon_5<\alpha_B$, such that the corresponding terms can be absorbed into the left-hand side of \eqref{eq:stability_almost_final}. Finally, we apply the discrete Grönwall inequality on the terms $\varepsilon_1\Delta t\sum_{n=1}^N\lVert p_n^h \rVert_{L^2(\Omega)}^2$ and $C_4\Delta t\sum_{n=1}^N\lVert \bu_n^h \rVert_{H^1(\Omega)}^2$; this requires us to absorb the $N$-th terms into the left-hand side of \eqref{eq:stability_almost_final}, and hence we require
\begin{align}
    \frac{c_0}{2}-\varepsilon_1\Delta t&>0,\\ \label{eq:idk}
    \frac{\alpha_A}{2}-C_4\Delta t&>0.
\end{align}
The first of these can always be satisfied by choosing a sufficiently small $\varepsilon_1$. However, the second inequality requires more careful consideration, since $C_4$ is not chosen freely, but rather, it depends on the value of $\varepsilon_4$ by Young's inequality; specifically, we have $C_4=(4\varepsilon_4)^{-1}$. Considering that we require $\varepsilon_4<\alpha_B$, we obtain from \eqref{eq:idk} the following requirement on the time step size:
\begin{equation} \label{eq:time_step_stability}
    \Delta t<2\alpha_A\alpha_B.
\end{equation}
Thus, if the time step fulfills \eqref{eq:time_step_stability}, we may use Grönwall's inequality to arrive at the estimate:
\begin{equation} \label{eq:stability_estimate_almost}
    \begin{aligned}
        &\lVert \bu_N^h \rVert_{H^1(\Omega)}^2+\lVert p_N^h \rVert_{L^2(\Omega)}^2+ \sum_{n=1}^N \Delta t \lVert \frunh \rVert_{H^1(\Omega)}^2+\sum_{n=1}^N \Delta t \lVert \bk^{1/2}\nabla p_n^h \rVert_{L^2(\Omega)}^2\\
        &\leq C \left(\lVert \bu_0^h \rVert_{H^1(\Omega)}^2+\lVert p_0^h \rVert_{L^2(\Omega)}^2+\Delta t\sum_{n=1}^N\left(\lVert \psi_n \rVert_{L^2(\Omega)}^2+\lVert \bm{f}_n\rVert_{L^2(\Omega)}^2+\lVert \bm{b}_n\rVert_{L^2(\Gamma_n)}^2+\lVert g \rVert_{L^2(\Gamma_c)}^2\right)\right),
    \end{aligned}
\end{equation}
where $C$ depends on the final time $T$, due to the Grönwall inequality, but is independent of $n, h$ and $\Delta t$. Since $\bu_0^h=\bu_0^I$ and $p_0^h=p_0^I$, the first two terms on the right-hand side can be bounded, up to a constant, by $\lVert\bu_0\rVert_{H^1(\Omega)}$ and $\lVert p_0\rVert_{L^2(\Omega)}$, by stability of the interpolation operators. Similarly, the final four terms on the right-hand side can be bounded, up to a constant, by continuous $L^2$-integrals in time of the data, by stability of the Riemann sums. Hence, the entire right-hand side can be bounded by a constant independent of $n, h$ and $\Delta t$.
\end{proof}

\begin{rmrk} \label{rem:time_step_size}
    The restriction on the time step size comes from the nonlinearity of the contact mechanics term \eqref{eq:stability_contact_bound}, where there are two terms that must be absorbed into the left-hand side of \eqref{eq:stability_almost_final}, but the size of both terms cannot be controlled by the constants of Young's inequality simultaneously. Alternatively, we could have multiplied $\lVert \bu_n^h \rVert_{H^1(\Omega)}^2$ by $\varepsilon_4$ and $\lVert \delta_t \bu_n^h \rVert_{H^1(\Omega)}^2$ by $C_4$; it can be shown that this would result in the exact same restriction \eqref{eq:time_step_stability} on the time step size.
\end{rmrk}

\subsection{Error estimate} \label{sec:error_estimates}

In this section, we derive an a priori error estimate between the fully discrete equations \eqref{eq:discrete_mechanics}, \eqref{eq:discrete_biot} and the continuous equations \eqref{eq:weak_mechanics}, \eqref{eq:weak_biot}. We first recall some standard estimates on the temporal error introduced by finite differences, using Taylor expansions:
\begin{lmm} \label{lemma:time_derivative_error}
    If $v \in H^2(0,T;X)$, where $X$ is either $L^2(\Omega)$ or $H^1(\Omega)$, then the error between the time derivative $\dot{v}_n$ and the backwards finite difference approximation $\delta_t v_n$, summed over all time steps $t_n$, satisfies:
    \begin{equation} \label{eq:taylor_bound_1}
        \sum_{n=1}^N\left\lVert\dot{v}_n-\delta_t v_n \right\rVert_X^2 \leq C\Delta t\lVert \ddot{v} \rVert_{L^2(0,T;X)}^2,
    \end{equation}
    where $C>0$ is a constant depending on $\Omega$. Moreover, we also have the following bound for $v \in H^1(0,T;X):$
    \begin{equation} \label{eq:taylor_bound_2}
        \sum_{n=1}^N\lVert  v_n-v_{n-1} \rVert_X^2 \leq \Delta t\lVert \dot{v} \rVert^2_{L^2(0,T;X)}. 
    \end{equation}
\end{lmm}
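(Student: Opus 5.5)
Both bounds follow from the integral form of Taylor's theorem in the Bochner space $X$, applied interval by interval, followed by Cauchy--Schwarz in time and summation over $n$. Functions in $H^1(0,T;X)$ and $H^2(0,T;X)$ have absolutely continuous representatives (for $H^2$, also with $\dot v$ absolutely continuous). Hence $v_n$ and $\dot v_n$ are well defined and the fundamental theorem of calculus holds in $X$. I will prove the second bound \eqref{eq:taylor_bound_2} first, since it is simpler and uses the same mechanism.

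For \eqref{eq:taylor_bound_2}, write $v_n-v_{n-1}=\int_{t_{n-1}}^{t_n}\dot v(s)\,ds$. The norm of the Bochner integral is bounded by the integral of the norm, and Cauchy--Schwarz then gives
\begin{equation*}
\lVert v_n-v_{n-1}\rVert_X^2\leq \Delta t\int_{t_{n-1}}^{t_n}\lVert \dot v(s)\rVert_X^2\,ds .
\end{equation*}
Summing over $n=1,\dots,N$, the subintervals tile $(0,T)$. This yields exactly $\Delta t\lVert\dot v\rVert_{L^2(0,T;X)}^2$ with constant one.

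For \eqref{eq:taylor_bound_1}, I use the Taylor expansion with integral remainder around $t_n$:
\begin{equation*}
v_{n-1}=v_n-\Delta t\,\dot v_n+\int_{t_{n-1}}^{t_n}(s-t_{n-1})\,\ddot v(s)\,ds .
\end{equation*}
This can be checked by integrating by parts, or by writing $v_n-v_{n-1}=\int_{t_{n-1}}^{t_n}\dot v(s)\,ds$ and $\dot v(s)=\dot v_n-\int_s^{t_n}\ddot v(r)\,dr$, then using Fubini. Rearranging gives
\begin{equation*}
\dot v_n-\delta_t v_n=\frac{1}{\Delta t}\int_{t_{n-1}}^{t_n}(s-t_{n-1})\,\ddot v(s)\,ds .
\end{equation*}
Next I apply Cauchy--Schwarz to this identity. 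Since $\int_{t_{n-1}}^{t_n}(s-t_{n-1})^2ds=\Delta t^3/3$, we get
\begin{equation*}
\lVert\dot v_n-\delta_t v_n\rVert_X^2\leq \frac{1}{\Delta t^2}\cdot\frac{\Delta t^3}{3}\int_{t_{n-1}}^{t_n}\lVert\ddot v(s)\rVert_X^2\,ds .
\end{equation*}
Summing over $n$ gives \eqref{eq:taylor_bound_1} with $C=1/3$. In particular the constant is independent of $\Omega$, so the statement's constant "depending on $\Omega$" is harmless. The argument is identical whether $X=L^2(\Omega)$ or $X=H^1(\Omega)$, since only the Hilbert (indeed Banach) space structure of $X$ is used.

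The only step needing care is justifying the pointwise Taylor identity in the Bochner setting. I would handle it by invoking the standard fact that $H^1(a,b;X)\subset C([a,b];X)$ with $v(t)-v(s)=\int_s^t\dot v$. Applying this to $v$ and to $\dot v\in H^1(0,T;X)$ gives the two fundamental-theorem identities used above. Fubini for Bochner integrals is then applicable because $\ddot v\in L^2\subset L^1$ on bounded intervals.
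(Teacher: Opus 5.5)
Your proof is correct and follows the route the paper indicates. The paper omits the proof, saying only that both bounds follow from Taylor's theorem with integral remainder; you carry this out in full, using the fundamental theorem of calculus plus Cauchy--Schwarz on each subinterval for the second bound, and the remainder identity $\dot v_n-\delta_t v_n=\frac{1}{\Delta t}\int_{t_{n-1}}^{t_n}(s-t_{n-1})\,\ddot v(s)\,ds$ for the first, which yields the explicit constant $C=1/3$.
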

The proof is skipped, as these estimates are straightforward consequences of Taylor's theorem with integral remainder.
\begin{thrm} \label{thm:error_estimate}
    If $\bu \in H^1(0,T;\bm{H}^{r+1}(\Omega)) \cap H^2(0,T;\bm{V})$ and $p \in H^1(0,T;H^{s+1}(\Omega)) \cap H^2(0,T;L^2(\Omega))$, and the time step size satisfies $\Delta t < 2 \alpha_A \alpha_B$, then the sequence of solutions $(\bu_n^h,p_n^h)$, $1 \leq n \leq N$, to problem (Q-D), satisfies:
        \begin{equation} \label{eq:asymptotic_estimate}
        \begin{aligned}
            \max_{1\leq n \leq N}\lVert p_n-p_n^h\rVert_{L^2(\Omega)}^2 &+ \max_{1 \leq n \leq N}\lVert \bu_n-\bu_n^h\rVert_{H^1(\Omega)}^2\\
            &+\sum_{n=1}^N \Delta t\lVert \bk^{1/2}\nabla (p_n-p_n^h) \rVert_{L^2(\Omega)}^2 \leq C((\Delta t)^2 + h^{r}+h^{2s}),
        \end{aligned}
    \end{equation}
    where $C$ depends on the final time $T$, but is independent of $n, h$ and $\Delta t$.
\end{thrm}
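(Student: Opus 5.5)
We split the errors using the Lagrange interpolants: $\bu_n-\bu_n^h=(\bu_n-\bu_n^I)+(\bu_n^I-\bu_n^h)=:\inu+\enu$ and $p_n-p_n^h=(p_n-p_n^I)+(p_n^I-p_n^h)=:\inp+\enp$. The interpolation parts are controlled by \eqref{eq:interpolation_estimate}, giving $O(h^{2r})$ and $O(h^{2s})$ contributions in the squared norms. The remaining task is to bound the discrete errors $\enu,\enp$.

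\textbf{Discrete error equations.} For the flow part, subtract \eqref{eq:discrete_biot} from \eqref{eq:weak_biot} at $t=t_n$ with $q=q^h\in Q^h$. Inserting $\delta_t$ of the exact solution, this gives an equation for $(c_0\delta_t\enp+\alpha\nabla\cdot\frenu,q^h)+\frac1\mu(\bk\nabla\enp,\nabla q^h)$. Its right-hand side contains consistency terms $c_0(\dotp_n-\delta_tp_n)+\alpha\nabla\cdot(\dotu_n-\frun)$, interpolation terms $\delta_t\inp$ and $\nabla\cdot\franu$, and $\frac1\mu(\bk\nabla\inp,\nabla q^h)$. We test with $q^h=\enp$.

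For the mechanics part, use the Han--Sofonea technique. Test the continuous inequality \eqref{eq:weak_mechanics} at $t_n$ with $\bv=\frunh$, and the discrete one \eqref{eq:discrete_mechanics} with $\bv^h=\delta_t\bu_n^I$, then add. Writing $\dotu_n-\frunh=(\dotu_n-\frun)+\franu+\frenu$ yields
\[
A[\enu,\frenu]+B[\frenu,\frenu]-\alpha(\enp,\nabla\cdot\frenu)\le R_n+\mathcal{J}_n,
\]
with the following contents:
\begin{itemize}
\item $R_n$ collects linear remainders: $A$, $B$ and the coupling term acting on $\inu$, $\inp$, $\franu$ and $\dotu_n-\frun$, plus $\langle \bm f^*_n-\ldots\rangle$-type residual terms. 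Because the continuous inequality is tested with the discrete function, a residual of the form $A[\bu_n,\dotu_n-\delta_t\bu_n^I]+\dots+J[\bu_n,\delta_t\bu^I_n]-J[\bu_n,\dotu_n]-\langle\bm f_n^*,\cdot\rangle$ also appears.
\item $\mathcal{J}_n$ is the combination $J[\bu_n,\frunh]-J[\bu_n,\dotu_n]+J[\bu_n^h,\delta_t\bu_n^I]-J[\bu_n^h,\frunh]$. We rewrite it as a four-term expression of the form in Lemma~\ref{lemma:friction_estimate} plus remainders. This bounds it by $C_J\lVert\bu_n-\bu_n^h\rVert_{H^1}\lVert\frenu\rVert_{H^1}$ plus terms in $\dotu_n-\delta_t\bu_n^I$ on $\Gamma_c$.
\end{itemize}

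\textbf{Energy estimate.} Add the mechanics inequality to the tested flow equation. As in the stability proof, the coupling terms $\pm\alpha(\enp,\nabla\cdot\frenu)$ cancel. Apply the discrete chain rule \eqref{eq:discrete_chain_rule} to $A[\enu,\frenu]$ and $c_0(\delta_t\enp,\enp)$, multiply by $\Delta t$, and sum from $n=1$ to $m$. Note $\enu_0=0$ and $\enp_0=0$ by the choice of initial data.

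Each right-hand side term is then handled by Cauchy--Schwarz and Young:
\begin{itemize}
\item Terms with $\frenu$ are absorbed into $\alpha_B\sum\Delta t\lVert\frenu\rVert_{H^1}^2$.
\item Terms with $\nabla\enp$ are absorbed into the $\bk$-term.
\item Terms with $\lVert\enu\rVert_{H^1}^2$ and $\lVert\enp\rVert^2$ are left for the discrete Grönwall inequality. The contact estimate produces $C\lVert\enu\rVert^2$ against $\varepsilon\lVert\frenu\rVert^2$, which is exactly the situation of Remark~\ref{rem:time_step_size}, so the same restriction $\Delta t<2\alpha_A\alpha_B$ permits Grönwall.
\item The consistency terms give $O(\Delta t^2)$ via Lemma~\ref{lemma:time_derivative_error}, using the assumed $H^2$-in-time regularity.
\item The interpolation terms $\delta_t\inu$ and $\delta_t\inp$ are written as time averages of $\dot{\bu}-\dot{\bu}^I$ and $\dot{p}-\dot{p}^I$, giving $h^{2r}$ and $h^{2s}$ via the $H^1(0,T;H^{r+1})$ and $H^1(0,T;H^{s+1})$ assumptions.
\end{itemize}

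\textbf{Main obstacle.} The hardest step is the residual produced by testing the continuous inequality with a discrete test function. This residual is not an equation, so it cannot be made quadratic in the error. Following \cite{han2002quasistatic}, we bound it linearly by $C\lVert\dotu_n-\delta_t\bu_n^I\rVert$, using the continuity of $A$, $B$, $J$ and the strong form of the traction, where only an $L^2(\Gamma_c)$ trace norm is needed. Summing in time, it contributes $C(\Delta t+h^r)$ rather than its square. This is precisely the source of the reduced rate $h^{r}$ (instead of $h^{2r}$) in \eqref{eq:asymptotic_estimate}, while the $\Delta t$ part is handled through Lemma~\ref{lemma:time_derivative_error} to keep $(\Delta t)^2$. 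Finally, combine with the triangle inequality and take the maximum over $m$ to conclude \eqref{eq:asymptotic_estimate}.
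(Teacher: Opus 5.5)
Your overall architecture (interpolant splitting, testing the flow error equation with $\enp$, cancellation of $\alpha(\enp,\nabla\cdot\frenu)$, Gr\"onwall under $\Delta t<2\alpha_A\alpha_B$) matches the paper. However, testing the discrete inequality with $\bv^h=\delta_t\bu_n^I$ does not deliver the $(\Delta t)^2$ term.

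With this choice, the non-quadratic residual is $R_n(\delta_t\bu_n^I)$, the defect of the continuous inequality at $t_n$ evaluated at $\delta_t\bu_n^I$. Its argument is $\delta_t\bu_n^I-\dotu_n=-\franu+(\frun-\dotu_n)$. So the temporal consistency error sits inside the one term that can only be bounded linearly. Summing gives
$\Delta t\sum_n C_n\lVert\frun-\dotu_n\rVert_{H^1(\Omega)}\le C\sqrt{T}\bigl(\Delta t\sum_n\lVert\frun-\dotu_n\rVert_{H^1(\Omega)}^2\bigr)^{1/2}\le C\Delta t\,\lVert\ddot{\bu}\rVert_{L^2(0,T;H^1(\Omega))}$,
which is $O(\Delta t)$ in a bound for \emph{squared} errors. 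Your argument therefore proves $C(\Delta t+h^r+h^{2s})$, which is half an order short in time.

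Lemma~\ref{lemma:time_derivative_error} yields $(\Delta t)^2$ only when the consistency error appears squared. That happens only when it multiplies an error quantity and Young's inequality is applied. Your closing claim that the $\Delta t$ part is kept at $(\Delta t)^2$ via that lemma has no mechanism behind it. Splitting the residual does not rescue it either. The difference $R_n(\delta_t\bu_n^I)-R_n(\dotu_n^I)$ is still bounded only by an $O(1)$ multiple of $\lVert\delta_t\bu_n^I-\dotu_n^I\rVert_{H^1(\Omega)}$. This is because the functional $\bv\mapsto A[\bu_n,\bv]+B[\dotu_n,\bv]-\alpha(p_n,\nabla\cdot\bv)-\langle\bm{f}_n^*,\bv\rangle$ (together with $J[\bu_n,\cdot]$) encodes the contact traction on $\Gamma_c$ and does not vanish.

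The remedy is the paper's choice $\bv^h=\dotu_n^I$, the interpolant of the exact velocity at $t_n$.

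\begin{itemize}
\item The residual becomes $\lvert R_n(\dotu_n^I)\rvert\le C_n\lVert\dot{a}_n^{\bu}\rVert_{H^1(\Omega)}$, which contains only spatial interpolation error. This is the sole source of the reduced $h^r$, and plain $H^1$-continuity suffices, so no strong form of the traction is needed.
\item The left-hand side becomes $A[\bu_n-\bu_n^h,\dotu_n-\frunh]+B[\dotu_n-\frunh,\dotu_n-\frunh]-\alpha(p_n-p_n^h,\nabla\cdot(\dotu_n-\frunh))$.
\item Expanding $\dotu_n-\frunh=(\dotu_n-\frun)+\franu+\frenu$ leaves $A[\enu,\frenu]+B[\frenu,\frenu]-\alpha(\enp,\nabla\cdot\frenu)$ on the left. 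Cross terms such as $A[\enu,\dotu_n-\frun]$, $2B[\frenu,\dotu_n-\frun]$ and $\alpha(\enp,\nabla\cdot(\dotu_n-\frun))$ move to the right.
\item Lemma~\ref{lemma:friction_estimate} bounds the contact terms by $C_J\lVert\inu+\enu\rVert_{H^1(\Omega)}\bigl(\lVert\dot{a}_n^{\bu}\rVert_{H^1(\Omega)}+\lVert\dotu_n-\frun\rVert_{H^1(\Omega)}+\lVert\franu+\frenu\rVert_{H^1(\Omega)}\bigr)$.
\end{itemize}

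Every temporal consistency error now multiplies an error quantity. Young's inequality and Lemma~\ref{lemma:time_derivative_error} then give $(\Delta t)^2$, and the rest of your argument goes through unchanged.
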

\begin{proof}
The proof is split up into three steps. We first derive an error equality between the continuous and discrete mass balance equations, then we derive an error inequality between the continuous and discrete variational inequalities, and finally the two estimates are combined.

\textbf{Step 1: Flow}

Our strategy for the mass balance equations closely follows the papers of Girault et al. \cite{poroelastic_notmixed,Girault2019}, in which error estimates were derived for the Biot equations generalized to a fractured porous medium. The discrete mass balance equation \eqref{eq:discrete_biot} is subtracted from the continuous equation \eqref{eq:weak_biot} evaluated at time $t_n$. Setting $q=q^h$, we obtain:
\begin{equation} \label{eq:biot_estimate_1}
    \left(c_0(\dot{p}_n-\frpnh)+\alpha\nabla \cdot (\dotu_n-\frunh),q^h\right)+\frac{1}{\mu}\left(\bk\nabla(p_n-p_n^h),\nabla q^h\right)=0.
\end{equation}
For convenience, we will add and subtract backwards finite differences of the exact solutions at $t_n$, thus splitting the error into a spatial discretization error and a temporal discretization error. We add and subtract $\delta_t p_n$ to the first term, and $\delta_t \bu_n$ to the second term, and we move the temporal discretization error to the right-hand side:
\begin{equation} \label{eq:biot_estimate_2}
\begin{aligned}
    \left(c_0\delta_t(p_n-p_n^h)+\alpha\nabla \cdot \delta_t(\bu_n-\bu_n^h),q^h\right)&+\frac{1}{\mu}\left(\bk\nabla(p_n-p_n^h),\nabla q^h\right)\\
    &=c_0\left(\delta_t p_n-\dotp_n,q^h\right)+\alpha\left(\nabla \cdot (\frun-\dotu_n),q^h\right).
\end{aligned}
\end{equation}
The spatial discretization errors, $\bu_n-\bu_n^h$ and $p_n-p_n^h$, are further split into interpolation and auxiliary discretization errors, by adding and subtracting $\bu_n^I$ and $p_n^I$. We define
\begin{equation} \label{eq:error_split}
\begin{aligned}
    \inu=\bu_n-\bu_n^I, \ \ \enu=\bu_n^I-\bu_n^h, \ \ \inp=p_n-p_n^I, \ \ \enp=p_n^I-p_n^h.
\end{aligned}
\end{equation}
Using the above notation, \eqref{eq:biot_estimate_2} can be rewritten as
\begin{equation} \label{eq:flow_estimate_added_2}
\begin{aligned}
    \left(c_0\delta_t(\inp+\enp)+\alpha\nabla \cdot \delta_t(\inu+\enu),q^h\right)&+\frac{1}{\mu}\left(\bk\nabla(\inp+\enp),\nabla q^h\right)\\
    &=c_0\left(\delta_t p_n-\dotp_n,q^h\right)+\alpha\left(\nabla \cdot (\frun-\dotu_n),q^h\right).
\end{aligned}
\end{equation}
We choose $q^h=\enp$ and use the discrete chain rule \eqref{eq:discrete_chain_rule} to rewrite the term $\left(\delta_t \enp,\enp\right)$. After moving all terms involving interpolation errors over on the right-hand side, we obtain:
\begin{equation} \label{eq:error_flow_final}
    \begin{aligned}
        &\frac{c_0}{2\Delta t}\left(\lVert \enp \rVert_{L^2(\Omega)}^2-\lVert e_{n-1}^p\rVert_{L^2(\Omega)}^2+\lVert \enp-e_{n-1}^p\rVert_{L^2(\Omega)}^2\right)+\alpha\left(\nabla \cdot \delta_t\enu,\enp\right)+\frac{1}{\mu}\lVert \bk^{1/2}\nabla \enp \rVert_{L^2(\Omega)}^2\\
        &=-c_0\left(\delta_t \inp,\enp\right)-\alpha\left(\nabla \cdot \delta_t\inu,\enp\right)-\frac{1}{\mu}\left(\bk\nabla\inp,\nabla\enp\right)+c_0\left(\delta_t p_n-\dotp_n,e_n^p\right)+\alpha\left(\nabla \cdot (\frun-\dotu_n),e_n^p\right).
    \end{aligned}
\end{equation}
\textbf{Step 2: Mechanics}

Next, we turn to the variational inequalities. Here, our strategy is based on the analysis done in chapter 11 of the book of Sofonea and Han \cite{han2002quasistatic}. We add the continuous variational inequality \eqref{eq:weak_mechanics}, evaluated at $t_n$, to the discrete \eqref{eq:discrete_mechanics} variational inequality, and set $\bv=\delta_t \bu_n^h$:
\begin{equation} \label{eq:inequalities_added_1}
    \begin{aligned}
        &A[\bu_n,\frunh-\dotu_n]+A[\bu_n^h,\bv^h-\frunh]+B[\dotu_n,\frunh-\dotu_n]+B[\frunh,\bv^h-\frunh]\\
        &-\alpha\left(p_n,\nabla \cdot (\frunh-\dotu_n)\right)-\alpha\left(p_n^h,\nabla \cdot (\bv^h-\frunh)\right)\\
        &+J[\bu_n,\frunh]-J[\bu_n,\dotu_n]+J[\bu_n^h,\bv^h]-J[\bu_n^h,\frunh]\geq \left\langle\bm{f}^*_n,\frunh-\dotu_n+\bv^h-\frunh\right\rangle.
    \end{aligned}
\end{equation}
We aim to rewrite \eqref{eq:inequalities_added_1} to obtain terms containing the errors $\bu_n-\bu_n^h, \dotu_n-\frunh$ and $p_n-p_n^h$. Doing so will lead to an additional residual-type term, which is typical for variational inequalities \cite{han2002quasistatic,kikuchi_oden}. Following \cite{han2002quasistatic}, the terms on the left-hand side of \eqref{eq:inequalities_added_1} are rewritten to obtain:
\begin{equation} \label{eq:inequalities_added_2}
    \begin{aligned}
        &A[\bu_n-\bu_n^h,\dotu_n-\frunh]+B[\dotu_n-\frunh,\dotu_n-\frunh]-\alpha\left(p_n-p_n^h,\nabla \cdot (\dotu_n-\frunh)\right)\\
        &\leq A[\bu_n^h-\bu_n,\bv^h-\dotu_n]+B[\frunh-\dotu_n,\bv^h-\dotu_n]-\alpha\left(p_n^h-p_n,\nabla \cdot (\bv^h-\dotu_n)\right)\\
        &+J[\bu_n,\frunh]-J[\bu_n,\bv^h]+J[\bu_n^h,\bv^h]-J[\bu_n^h,\frunh]+R_n(\bv^h),
    \end{aligned}
\end{equation}
where the residual-type term $R_n$ is defined as follows:
\begin{equation} \label{eq:residual}
\begin{aligned}    R_n(\bv^h) = A&[\bu_n,\bv^h-\dotu_n]+B[\dotu_n,\bv^h-\dotu_n]-\alpha\left(p_n,\nabla \cdot (\bv^h-\dotu_n)\right)\\
&+J[\bu_n,\bv^h]-J[\bu_n,\dotu_n]-\left\langle\bm{f}^*_n,\bv^h-\dotu_n\right\rangle.
\end{aligned}
\end{equation}
As for the mass balance equation, we split the error into temporal and spatial discretization errors, and the latter is further split into an interpolation and auxiliary discretization error. More specifically, we write, using the same notation \eqref{eq:error_split} as before:
\begin{align}
    \bu_n-\bu_n^h &= \bu_n-\bu_n^I + \bu_n^I - \bu_n^h = \inu + \enu \\
    \dotu_n-\frunh &= \dotu_n - \frun + \frun - \frunh = \dotu_n - \frun + \franu + \frenu.
\end{align}
Moreover, we set $\bv^h=\dotu_n^I$, and use the notation $\dot{a}_n^{\bu}=\dotu_n-\dotu_n^I$. This is a natural choice of test function, as $\dot{a}_n^{\bu}$ will satisfy the interpolation error estimate \eqref{eq:interpolation_estimate} given sufficient regularity of $\dotu$. After an elementary but lengthy calculation, we can rewrite \eqref{eq:inequalities_added_2} to
\begin{equation} \label{eq:final_mechanics_equation}
    \begin{aligned}
        &\frac{1}{2\Delta t}\left(A[\enu,\enu]-A[e_{n-1}^{\bu},e_{n-1}^{\bu}]+A[\enu-e_{n-1}^{\bu},\enu-e_{n-1}^{\bu}]\right)+B[\frenu,\frenu]-\alpha\left(\enp,\frenu\right)\\
        &\leq A_n^* + B_n^* + C_n^* + J_n^* + R_n(\dotu_n^I),
    \end{aligned}
\end{equation}
where
\begin{equation}
    A^*_n = A[\inu+\enu,\dot{a}_n^{\bu}] -A[\enu,\dotu_n-\frun+\franu]-A[\inu,\dotu_n-\frun+\franu+\frenu],
\end{equation}
\begin{equation}
\begin{aligned}
    B^*_n = B&[\franu+\frenu+\dotu_n-\frun,\dot{a}_n^{\bu}]-B[\dotu_n-\frun,\dotu_n-\frun]-B[\franu,\franu]\\
        &-2B[\dotu_n-\frun,\franu]-2B[\frenu,\franu+\dotu_n-\frun],
\end{aligned}
\end{equation}
\begin{equation}
    C_n^* = \alpha\left(\inp,\nabla \cdot (\dotu_n-\frun+\franu+\frenu)\right)+\alpha\left(\enp,\nabla \cdot (\dotu_n-\frun+\franu)\right)-\alpha\left(\inp+\enp,\dot{a}_n^{\bu}\right),
\end{equation}
\begin{equation}
    J_n^* = J[\bu_n,\frunh]-J[\bu_n,\dotu_n^I]+J[\bu_n^h,\dotu_n^I]-J[\bu_n^h,\frunh],
\end{equation}
and the residual term $R_n$ is defined as before, but with $\bv^h=\dotu_n^I$ substituted. Note that, just like \eqref{eq:error_flow_final}, we have moved all terms involving interpolation operators to the right-hand side of \eqref{eq:final_mechanics_equation}.

\textbf{Step 3: Combining flow and mechanics}

The next step is then to combine \eqref{eq:error_flow_final} and \eqref{eq:final_mechanics_equation}. Noting that the coupling term $\alpha(\enp,\frenu)$ cancels, we obtain:
\begin{equation} \label{eq:big_estimate}
    \begin{aligned}
        &\frac{1}{2\Delta t}\left(A[\enu,\enu]-A[e_{n-1}^{\bu},e_{n-1}^{\bu}]+A[\enu-e_{n-1}^{\bu},\enu-e_{n-1}^{\bu}]\right)+B[\frenu,\frenu]\\
        &+\frac{c_0}{2\Delta t}\left(\lVert \enp \rVert_{L^2(\Omega)}^2-\lVert e_{n-1}^p\rVert_{L^2(\Omega)}^2+\lVert \enp-e_{n-1}^p\rVert_{L^2(\Omega)}^2\right)+\frac{1}{\mu}\lVert \bk^{1/2}\nabla \enp \rVert_{L^2(\Omega)}^2\\
        &\leq A_n^* + B_n^* + C_n^* + J_n^* + F_n^* + R_n(\dotu_n^I),
    \end{aligned}
\end{equation}
where we have defined the flow term $F_n^*$ as follows:
\begin{equation}
    F_n^* = -c_0\left(\delta_t \inp,\enp\right)-\alpha\left(\nabla \cdot \delta_t\inu,\enp\right)-\frac{1}{\mu}\left(\bk\nabla\inp,\nabla\enp\right)+c_0\left(\delta_t p_n-\dotp_n,\enp\right)+\alpha\left(\nabla \cdot (\delta_t \bu_n-\dotu_n),\enp\right).
\end{equation}
Henceforth, we will drop the terms $A[e_{n}^{\bu}-e_{n-1}^{\bu},e_{n}^{\bu}-e_{n-1}^{\bu}]$ and $\lVert \enp-e_{n-1}^p\rVert_{L^2(\Omega)}$ from the left-hand side of \eqref{eq:big_estimate}, as these terms are not needed for the upcoming analysis. Multiplying both sides of \eqref{eq:big_estimate} by $\Delta t$, summing both sides from $n=1$ to $N$, and using the coercivity \eqref{eq:v-ellipticity} of the bilinear forms, plus the fact that $e_0^p=0,e_0^{\bu}=\bm0$, due to our choice of initial conditions, we get
\begin{equation} \label{eq:big_estimate_summed}
    \begin{aligned}
        &\frac{\alpha_A}{2}\lVert e_N^{\bu}\rVert_{H^1(\Omega)}^2+\alpha_B\sum_{n=1}^N\Delta t\left\lVert \frenu \right\rVert_{H^1(\Omega)}^2+\frac{c_0}{2}\lVert e_N^p\rVert_{L^2(\Omega)}^2+\frac{1}{\mu}\sum_{n=1}^N\Delta t\lVert \bk^{1/2}\nabla \enp \rVert_{L^2(\Omega)}^2\\
    &\leq \Delta t \sum_{n=1}^N \lvert A_n^* \rvert + \lvert B_n^* \rvert + \lvert C_n^* \rvert + \lvert J_n^* \rvert + \lvert F_n^* \rvert + \lvert R_n(\dotu_n^I) \rvert.
    \end{aligned}
\end{equation}
We proceed to bound the terms on the right-hand side of \eqref{eq:big_estimate_summed}. In the upcoming estimates, we denote generic constants by $C$, while $\varepsilon_i$ denote constants of Young's inequality that are chosen freely (and they must be chosen sufficiently small, as will be shown). With the exception of one case, which will be treated separately, the constants involved are independent of the discretization parameters.

Starting with the $A_n^*$-term, we use the continuity of the bilinear form \eqref{eq:continuity_bilinear} and the triangle inequality to obtain
\begin{equation} \label{eq:error_comp_1}
    \begin{aligned}
        \Delta t\sum_{n=1}^N \lvert A_n^* \rvert
        \leq m_A\Delta t\sum_{n=1}^N &\left(\vphantom{\rule{0pt}{3ex}}\lVert \dot{a}_n^{\bu}\rVert_{H^1(\Omega)}\left(\lVert \inu \rVert_{H^1(\Omega)}+\lVert \enu \rVert_{H^1(\Omega)}\right)\right.\\
        &+\lVert \enu \rVert_{H^1(\Omega)}\left( \lVert \dotu_n-\delta_t\bu_n\rVert_{H^1(\Omega)}+\lVert \delta_t \inu \rVert_{H^1(\Omega)}\right)\\
        &\left.+\lVert \inu \rVert_{H^1(\Omega)}\left( \lVert \dotu_n-\delta_t\bu_n\rVert_{H^1(\Omega)}+\lVert \delta_t \inu \rVert_{H^1(\Omega)}+\lVert \delta_t \enu \rVert_{H^1(\Omega)}\right)\vphantom{\rule{0pt}{3ex}}\right).\\
    \end{aligned}
\end{equation}
Using Young's inequality on each term on the right-hand side of \eqref{eq:error_comp_1} yields, after merging the various constants:
\begin{equation} \label{eq:error_comp_2}
\begin{aligned}
    \Delta t \sum_{n=1}^N \lvert A_n^* \rvert &\leq \varepsilon_1\sum_{n=1}^N \Delta t \lVert \delta_t \enu \rVert_{H^1(\Omega)}^2\\
    &+C\Delta t \sum_{n=1}^N \left(\lVert \delta_t \inu \rVert_{H^1(\Omega)}^2+\lVert \dotu_n-\delta_t \bu_n \rVert_{H^1(\Omega)}^2+\lVert \enu \rVert_{H^1(\Omega)}^2+\lVert \inu \rVert_{H^1(\Omega)}^2+\lVert \dot{a}_n^{\bu}\rVert_{H^1(\Omega)}^2\right).
\end{aligned}
\end{equation}
The first two terms inside the second sum on the right-hand side of \eqref{eq:error_comp_2} are bounded by the Taylor expansion formulas \eqref{eq:taylor_bound_1} and \eqref{eq:taylor_bound_2} as follows:
\begin{equation} \label{eq:taylor_estimate_1}
  \Delta t \sum_{n=1}^N\lVert \dotu_n-\frun\rVert_{H^1(\Omega)}^2 \leq C(\Delta t)^2 \lVert \ddot{\bu}\rVert_{L^2(0,T;H^1(\Omega))}^2,  
\end{equation}
\begin{equation} \label{eq:taylor_estimate_2}
    \Delta t\sum_{n=1}^N \lVert \franu \rVert_{H^1(\Omega)}^2=\sum_{n=1}^N \frac{1}{\Delta t}\lVert \inu-a_{n-1}^{\bu} \rVert_{H^1(\Omega)}^2 \leq \lVert \bu-\bu^I\rVert_{L^2(0,T;H^1(\Omega))}^2,
\end{equation}
Hence, the final estimate for the $A_n^*$-term becomes:
\begin{equation} \label{eq:estimate_A}
    \begin{aligned}
            \Delta t \sum_{n=1}^N \lvert A_n^* \rvert \leq \varepsilon_1\sum_{n=1}^N \Delta t \lVert \delta_t \enu \rVert_{H^1(\Omega)}^2+C&\left(\vphantom{\sum_{1}^N}\lVert \bu-\bu^I \rVert_{L^2(0,T;H^1(\Omega))}^2+(\Delta t)^2\lVert \ddot{\bu} \rVert_{L^2(0,T;H^1(\Omega))}^2\right.\\
            &\left.+\Delta t\sum_{n=1}^N \left(\lVert \enu \rVert_{H^1(\Omega)}^2+\lVert \inu \rVert_{H^1(\Omega)}^2+\lVert \dot{a}_n^{\bu}\rVert_{H^1(\Omega)}^2\right)\right).
    \end{aligned}
\end{equation}
The terms $B_n^*$, $C_n^*$ and $F_n^*$ are estimated in the same way as the $A_n^*$-term, and so we will merely state these estimates without showing details of the derivations. Formulas analogous to \eqref{eq:taylor_estimate_1} and \eqref{eq:taylor_estimate_2} are used to bound terms of the form $\Delta t \sum_{n=1}^N\lVert \delta_t \inp \rVert_{L^2(\Omega)}^2$ and $\Delta t \sum_{n=1}^N\lVert \dotp_n-\delta_t p_n \rVert_{L^2(\Omega)}^2$. For terms involving the divergence operator, we use the fact that $\lVert \nabla \cdot \bv\rVert_{L^2(\Omega)} \leq C\lVert \bv \rVert_{H^1(\Omega)}$, for some constant $C$ independent of discretization parameters. The estimates are as follows:
\begin{equation} \label{eq:estimate_B}
    \begin{aligned}
        \Delta t \sum_{n=1}^N \lvert B_n^* \rvert \leq \varepsilon_2\Delta t \sum_{n=1}^N \lVert \delta_t \enu\rVert_{H^1(\Omega)}^2+C\left(\vphantom{\sum_1^N}\lVert \bu-\bu^I\rVert^2_{L^2(0,T;H^1(\Omega)}\right.&+(\Delta t)^2\lVert \ddot{\bu} \rVert_{L^2(0,T;H^1(\Omega))}^2\\
        &\left.+\Delta t\sum_{n=1}^N\lVert \dot{a}_n^{\bu}\rVert_{H^1(\Omega)}^2\right),
    \end{aligned}
\end{equation}
\begin{equation} \label{eq:estimate_C}
    \begin{aligned}
        \Delta t \sum_{n=1}^N \lvert C_n^* \rvert \leq \varepsilon_3\Delta t\sum_{n=1}^N \lVert\delta_t \enu \rVert_{H^1(\Omega)}^2+C&\left(\vphantom{\sum_1^N}\lVert \bu-\bu^I\rVert^2_{L^2(0,T;H^1(\Omega))}+(\Delta t)^2\lVert \ddot{\bu} \rVert_{L^2(0,T;H^1(\Omega))}^2\right.\\
        &\left.+\Delta t\sum_{n=1}^N \left( \lVert \enp \rVert_{L^2(\Omega)}^2+\lVert \inp \rVert_{L^2(\Omega)}^2+\lVert \dot{a}_n^{\bu}\rVert_{H^1(\Omega)}^2\right)\right),
    \end{aligned}
\end{equation}
\begin{equation} \label{eq:estimate_F}
    \begin{aligned}
        \Delta t \sum_{n=1}^N &\lvert F_n^* \rvert \leq \varepsilon_4\Delta t \sum_{n=1}^N \lVert \bk^{1/2}\nabla \enp \rVert_{L^2(\Omega)}^2+C\left(\vphantom{\sum_1^N }\lVert p-p^I \rVert_{L^2(0,T;L^2(\Omega))}^2+\lVert \bu-\bu^I \rVert_{L^2(0,T;H^1(\Omega))}^2\right.\\
        &\left.+(\Delta t)^2\left(\lVert \ddot{p}\rVert_{L^2(0,T;L^2(\Omega))}^2+\lVert \ddot{\bu} \rVert_{L^2(0,T;H^1(\Omega))}^2\right)+\Delta t \sum_{n=1}^N\left(\lVert \enp \rVert_{L^2(\Omega)}^2+ \lVert \bk^{1/2}\nabla \inp \rVert_{L^2(\Omega)}^2\right)\right).
    \end{aligned}
\end{equation}
Next, we consider the $J_n^*$-term. It is bounded by Lemma \ref{lemma:friction_estimate}, and by writing $\dotu_n^I-\frunh=\dotu_n^I-\dotu_n+\dotu_n-\frun+\frun-\frunh$: 
\begin{equation} \label{eq:j_estimate}
\begin{aligned}
    \lvert J_n^*\rvert &= \left\lvert J[\bu_n,\frunh]-J[\bu_n,\dotu_n^I]+J[\bu_n^h,\dotu_n^I]-J[\bu_n^h,\frunh] \right\rvert\\
    &\leq C_J \lVert \bu_n-\bu_n^h \rVert_{H^1(\Omega)}\lVert \dotu_n^I-\frunh \rVert_{H^1(\Omega)}\\
    &\leq C_J\lVert \inu+\enu \rVert_{H^1(\Omega)}(\lVert \dotu_n^I-\dotu_n\rVert_{H^1(\Omega)}+\lVert \dotu_n-\frun\rVert_{H^1(\Omega)}+\lVert \frun-\frunh\rVert_{H^1(\Omega)})\\
    &= C_J\lVert \inu+\enu \rVert_{H^1(\Omega)}(\lVert \dot{a}_n^{\bu}\rVert_{H^1(\Omega)}+\lVert \dotu_n-\frun\rVert_{H^1(\Omega)}+\lVert \franu+\frenu\rVert_{H^1(\Omega)}).
\end{aligned}
\end{equation} 
From here we perform the same kinds of computations as for the previous terms to get
\begin{equation} \label{eq:estimate_J}
    \begin{aligned}
       \Delta t \sum_{n=1}^N \lvert J_n^* \rvert \leq \varepsilon_5 \sum_{n=1}^N\Delta t \lVert \delta_t \enu \rVert_{H^1(\Omega)}^2+C&\left(\vphantom{\sum_1^N}\lVert \bu-\bu^I \rVert_{L^2(0,T;H^1(\Omega))}^2+(\Delta t)^2\lVert \ddot{\bu} \rVert_{L^2(0,T;H^1(\Omega))}^2\right.\\
       &\left.+\Delta t\sum_{n=1}^N\left( \lVert \enu \rVert_{H^1(\Omega)}^2+\lVert \inu \rVert_{H^1(\Omega)}^2+\lVert \dot{a}_n^{\bu}\rVert_{H^1(\Omega)}^2\right)\right).
    \end{aligned}
\end{equation}
Finally, we consider the residual term $R_n$. By simple usage of Cauchy-Schwarz, continuity of the bilinear forms, and an estimate on the $J$-terms similar to Lemma \ref{lemma:friction_estimate}, we can bound the term by
\begin{equation}
    \begin{aligned}
        \lvert R_n(\dotu_n^I)\rvert &\leq \left\lvert A[\bu_n,\dot{a}_n^{\bu}]\right\rvert+\left\lvert B[\dotu_n,\dot{a}_n^{\bu}]\right\rvert+\alpha\left\lvert\left(p_n,\nabla \cdot \dot{a}_n^{\bu}\right)\right\rvert+\left\lvert J[\bu_n,\dotu_n^I]-J[\bu_n,\dotu_n]\right\rvert+\left\lvert\left\langle\bm{f}^*_n,\dot{a}_n^{\bu}\right\rangle\right\rvert\\
        &\leq C_n \lVert \dot{a}_n^{\bu}\rVert_{H^1(\Omega)},
    \end{aligned}
\end{equation}
where
\begin{equation}
\begin{aligned}
   C_n&=m_A\lVert \bu_n \rVert_{H^1(\Omega)}+m_B\lVert \dotu_n\rVert_{H^1(\Omega)}+\alpha \lVert p_n \rVert_{L^2(\Omega)}\\
   &+C_{\tau}(1+\mu_{\text{fr}})c_p\lVert (u_{n\nu}-g)_+\rVert_{L^2(\Gamma_c)}+\lVert \bm{f}_n\rVert_{L^2(\Omega)}+\lVert \bm{b}_n\rVert_{L^2(\Omega)}.
\end{aligned}
\end{equation}
We note that this constant depends on the temporal discretization, which is in contrast to the previous constants, which have all been independent of discretization parameters. However, our assumptions on the temporal regularity of the solution and the data are sufficient to ensure that $C_n$ is bounded on $[0,T]$, and we can define $C$ to be any constant such that $\max_{1\leq n \leq N}C_n<C$ for all $N$, and hence obtain the estimate:
\begin{equation}
    \Delta t \sum_{n=1}^N \lvert R_n(\dotu_n^I)\rvert \leq C \Delta t \sum_{n=1}^N \lVert \dot{a}_n^{\bu}\rVert_{H^1(\Omega)},
\end{equation}
Note that this term does not appear in squared form, which will result in a reduced spatial convergence order.

The $\varepsilon_i$ constants are now chosen such that the corresponding terms can be absorbed into the left-hand side of \eqref{eq:big_estimate_summed}. As for the stability estimate of Section \ref{sec:stability_estimates}, the only term that requires careful consideration is the term of the form $\lVert \enu \rVert_{H^1(\Omega)}\lVert \delta_t \enu \rVert_{H^1(\Omega)}$ encountered in the contact mechanics estimate \eqref{eq:j_estimate}, where both terms must be absorbed into the left-hand side of \eqref{eq:big_estimate_summed}, but the sizes of the two Young's constants cannot be controlled simultaneously (recall Remark \ref{rem:time_step_size}). However, this merely leads to the exact same restriction \eqref{eq:time_step_stability} on the time step size as before, namely $\Delta t < \alpha_A \alpha_B$, when employing Grönwall's inequality (technically, the left-hand side of \eqref{eq:idk} would involve more constants of Young's inequality in this case; however, these constants can all be chosen arbitrarily small, so the final restriction on the time step size remains the same). Hence, we choose $\varepsilon_1+\varepsilon_2+\varepsilon_3+\varepsilon_5<\alpha_B$ and $\varepsilon_4<1/\mu$, absorb the corresponding terms into the left-hand side of \eqref{eq:big_estimate_summed} and employ Grönwall's inequality on the terms $C\Delta t \sum_{n=1}^N \lVert \enu \rVert_{H^1(\Omega)}^2$ and $C\Delta t\sum_{n=1}^N \lVert \enp \rVert_{L^2(\Omega)}^2$. The result is, after combining the various constants:
\begin{equation} \label{eq:final_estimate}
    \begin{aligned}
        &\lVert e_N^{\bu}\rVert_{H^1(\Omega)}^2+\sum_{n=1}^N\Delta t\left\lVert \frenu \right\rVert_{H^1(\Omega)}^2+\lVert e_N^p\rVert_{L^2(\Omega)}^2+\sum_{n=1}^N\Delta t\lVert \bk^{1/2}\nabla \enp \rVert_{L^2(\Omega)}^2\\
    & \leq C \left(\vphantom{\sum_{n=1}^N} \lVert \bu-\bu^I\rVert_{L^2(0,T;H^1(\Omega))}^2 + \lVert p-p^I\rVert_{L^2(0,T;L^2(\Omega))}^2+ (\Delta t)^2\left(\lVert \ddot{\bu}\rVert_{L^2(0,T;H^1(\Omega))}^2+\lVert \ddot{p}\rVert_{L^2(0,T;L^2(\Omega))}^2\right)\right.\\
    &\ \ \ \ \ \ \ \ \ \ \ \ \ \ \left.  +\Delta t \sum_{n=1}^N \left(\lVert \dot{a}_n^{\bu}\rVert_{H^1(\Omega)}+\lVert \inu \rVert_{H^1(\Omega)}^2 + \lVert \dot{a}_n^{\bu} \rVert_{H^1(\Omega)}^2+\lVert \inp \rVert_{L^2(\Omega)}^2+\lVert \bk^{1/2}\nabla \inp \rVert_{L^2(\Omega)}^2\right)\right),
    \end{aligned}
\end{equation}
where $C$ depends on the final time $T$, but is independent of $h$ and $\Delta t$. Employing the interpolation error estimates \eqref{eq:interpolation_estimate} on the terms inside the sum on the right-hand side, and noting that our regularity assumptions $\bu \in H^1(0,T;\bm{H}^{r+1}(\Omega))$ and $p \in H^1(0,T;H^{s+1}(\Omega))$ ensure the stability in time of these terms, we obtain from \eqref{eq:final_estimate} the desired asymptotic error estimate \eqref{eq:asymptotic_estimate}.
\end{proof}

\section{Numerical experiment} \label{sec:num_exp}

In this section, we present a numerical experiment, in which numerical convergence orders are computed are compared to the theoretical orders of Theorem \ref{thm:error_estimate}. We set our domain to be the unit square, $\Omega=(0,1) \times (0,1)$, with the boundary split up as follows: $\Gamma_d=\Gamma_p=[0,1]\times \{1\}$, $\Gamma_c=[0,1] \times \{0\}$, $\Gamma_n=\Gamma \setminus (\Gamma_d \cup \Gamma_c)$ and $\Gamma_f=\Gamma \setminus \Gamma_p$. The source terms are set to be $\bm{f}=(0,-1)^T$ and $\psi=1$, and we consider a simple flat obstacle, setting $g=0$. To induce frictional effects, we set an inhomogeneous Neumann condition on the left boundary, with the value $\bm{b}=(0.3,0)$, while the right boundary is free (homogeneous Neumann condition). Unitary values are chosen for the material parameters, with the exception of the friction coefficient, which will be varied between two values in order to simulate both sticking and slipping behavior. For simplicity, we choose linear finite element spaces for both the pressure and displacement. The numerical experiment has been implemented in version 0.10 of the open-source finite element software FEniCSx \cite{fenics4,fenics1,fenics3,fenics2}.

To solve the discrete nonlinear systems, we have employed a penalty method, in which the Coulomb friction law is regularized, resulting in an algorithm similar to the one in Section 3.2 of \cite{SIMO199297}. In order to obtain a sufficiently accurate solution, a relatively high value of $10^{6}$ is used for the penalty parameter (the same value is also used for the constant $c_p$ of the normal compliance law \eqref{eq:regularized_normal_compliance}). The penalty regularization turns the discrete variational inequality \eqref{eq:discrete_mechanics} into an equality, and the resulting nonlinear equation system is solved by Newton's method, using absolute and relative tolerances of $10^{-6}$. In order to avoid the non-differentiable point of the absolute value function at zero, we replace $\lvert x \rvert$ by $\sqrt{x^2+\varepsilon}$ in the equations, for some small $\varepsilon$ (we have set $\varepsilon=10^{-10}$).  The positive cut function $(\cdot)_+$, which is also non-differentiable at zero, is rewritten as $(x)_+=(x+\lvert x\rvert)/2$, and the same regularization of the absolute value function is used here as well. The linear systems are solved by direct LU factorization.

We perform two separate convergence analyses, one with respect to the spatial discretization parameter $h$ and one with respect to the temporal discretization parameter $\Delta t$. As no analytical solution is available, we shall use a numerical solution produced with a finer discretization as our reference solution. For the spatial convergence analysis, this corresponds to a numerical solution produced on a sufficiently fine spatial grid, while for the temporal convergence analysis, our reference solution is produced by taking sufficiently small time steps. Denote the reference solution at time step $n$ by $\bu^{\text{ref}}_n, p_n^{\text{ref}}$. In accordance with our theoretical error estimate \eqref{eq:asymptotic_estimate}, we shall compute the $H^1$-errors of pressure and displacement as follows:
\begin{equation} \label{eq:h1_pressure_error}
    H^1\text{-pressure error} = \max_{1\leq n \leq N}\lVert p_n^{\text{ref}}-p_n^h\rVert_{L^2(\Omega)} + \left(\Delta t\sum_{n=1}^N \lVert \bk^{1/2}\nabla (p_n^{\text{ref}}-p_n^h) \rVert_{L^2(\Omega)}^2\right)^{1/2},
\end{equation}
\begin{equation} \label{eq:h1_displacement_error}
    H^1\text{-displacement error} = \max_{1 \leq n \leq N}\lVert \bu_n^{\text{ref}}-\bu_n^h\rVert_{H^1(\Omega)}.
\end{equation}
For both the convergence analysis in space and time, we consider two cases,  one with a low friction coefficient ($\mu_{\text{fr}}=0.01$), and one with a high friction coefficient ($\mu_{\text{fr}}=10)$. In the first case, the low friction coefficient causes the body to slip, while in the second case, the friction bound is not reached, and the body sticks to the obstacle (see Figure \ref{fig:visualization_simulation} below).
\begin{figure}[ht]
    \centering 
    \includegraphics[width=\textwidth]{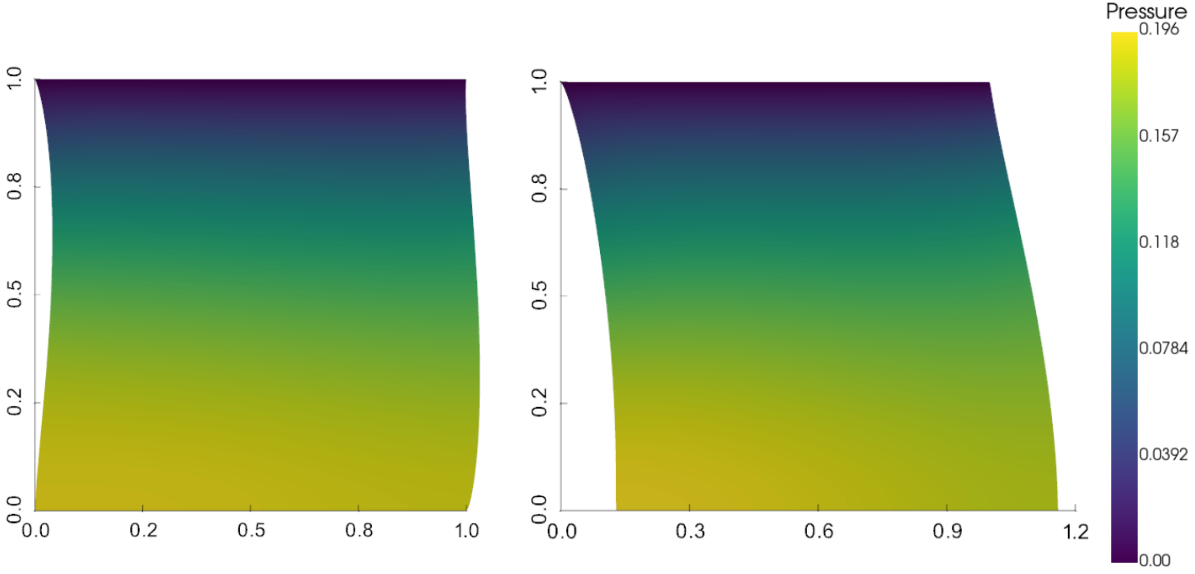}
    \caption{Visualization of the deformed body and pressure field using $\mu_{\text{fr}}=10$ (left) and $\mu_{\text{fr}}=0.01$ (right). The solution plotted here is the reference solution at $T=0.2$ for the spatial convergence analysis. The displacements are exaggerated (scaled by a factor of two) for visualization purposes.}
    \label{fig:visualization_simulation}
\end{figure}

We first perform the convergence analysis in space, by computing the $H^1$-errors of pressure and displacement for a sequence of nested, refined grids. More specifically, we consider grids with $1/h=5\cdot2^i, \ 0\leq i \leq 4$, and for our reference solution, we use $1/h=5\cdot2^6$. The $H^1$-errors \eqref{eq:h1_pressure_error} and \eqref{eq:h1_displacement_error} are computed by interpolating $p_n^{\text{ref}}, \bu_n^{\text{ref}}$ into the spaces in which $p_n^h,\bu_n^h$ belong. The simulations are run until $T=0.2$, employing four time steps of equal size $\Delta t=0.05$. The results are presented in Tables \ref{tab:errors_stick} and \ref{tab:errors_slip}, and in
Figure \ref{fig:h1_errors}. We observe first-order convergence of the pressure error, while the displacement error converges with a slightly lower order of around 0.8 in both cases. This is within the bounds of the theoretical error estimate \eqref{eq:asymptotic_estimate}, which for linear finite elements predicts a convergence order of 0.5 or higher for the displacement.
\newpage
\begin{table}[ht!]
\centering
\begin{tabular}{ | l | l | l | l | l | }
\hline
$1/h$ & $H^1$-pressure error & Pressure rate & $H^1$-displacement error & Displacement rate \\
\hline
5 & 0.00180281 & - & 0.00919235 & - \\
\hline
10 & 0.00237777 & -0.39936463 & 0.00461189 & 0.99507356 \\
\hline
20 & 0.00116485 & 1.02946132 & 0.00266417 & 0.7916744 \\
\hline
40 & 0.0005565 & 1.06569294 & 0.00157667 & 0.75680772 \\
\hline
80 & 0.00027269 & 1.02911958 & 0.00092292 & 0.77259447 \\
\hline
\end{tabular}
\caption{$H^1$-errors and spatial convergence rates for the case with $\mu_{\text{fr}}=10$.}
\label{tab:errors_stick}
\end{table}

\begin{table}[ht!]
\centering
\begin{tabular}{ | l | l | l | l | l | }
\hline
$1/h$ & $H^1$-pressure error & Pressure rate & $H^1$-displacement error & Displacement rate \\
\hline
5 & 0.00132438 & - & 0.0180996 & - \\
\hline
10 & 0.0022577 & -0.76954073 & 0.00668837 & 1.43623213 \\
\hline
20 & 0.00112122 & 1.00978011 & 0.00362842 & 0.88231131 \\
\hline
40 & 0.00053864 & 1.05767593 & 0.00206753 & 0.81143111 \\
\hline
80 & 0.00026721 & 1.01135374 & 0.00119108 & 0.79563674 \\
\hline
\end{tabular}
\caption{$H^1$-errors and spatial convergence rates for the case with $\mu_{\text{fr}}=0.01$.}
\label{tab:errors_slip}
\end{table}

\begin{figure}[H]
    \centering
    \captionsetup[subfigure]{labelformat=empty}
    \begin{subfigure}{0.49\textwidth} 
    \includegraphics[width=\textwidth]{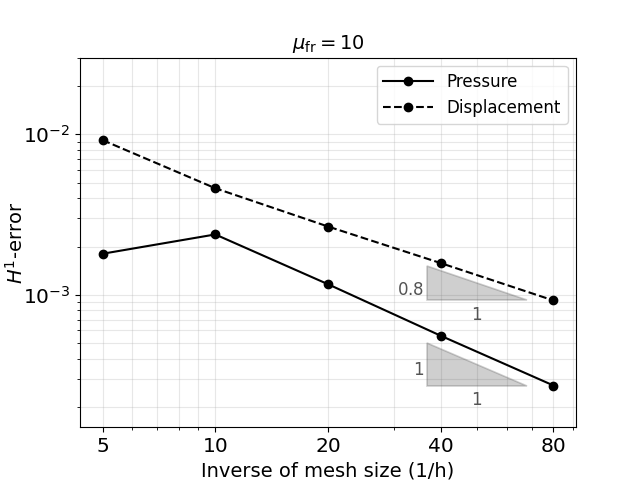}
    \caption{}
    \label{subfig:h1_errors_stick}
    \end{subfigure}
    \begin{subfigure}{0.49\textwidth} 
    \includegraphics[width=\textwidth]{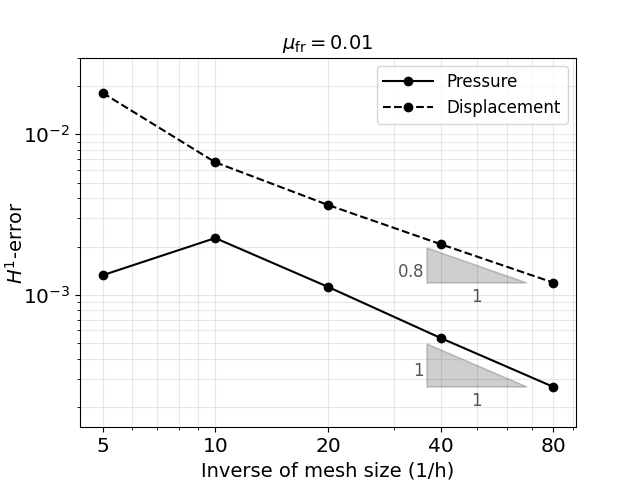}
    \caption{}
    \label{subfig:h1_errors_slip}
    \end{subfigure}
    \caption{$H^1$-errors for various values of the spatial discretization parameter $h$, for the case with a high friction coefficient (left) and low friction coefficient (right).} 
    \label{fig:h1_errors}
\end{figure}
Next, we test the convergence order with respect to time, keeping the spatial mesh constant with $1/h=80$.
The simulations are run until the final time $T=2$, and the number of time steps $N$ are varied by setting $N=2^i, \ 0 \leq i \leq 5$. To produce our reference solution, we use $N=2^8$, and the $H^1$-errors \eqref{eq:h1_pressure_error} and \eqref{eq:h1_displacement_error} are computed by sampling the reference solution at the discrete time points it shares with the coarser solution. We remark that for smaller values of $T$, for instance 0.2 as used in the previous simulations, the errors would sometimes stagnate as the time step was lowered, indicating that the spatial error eventually dominates. Hence, we have chosen a larger value of $T$ for the analysis of the temporal error. The results are presented in Tables \ref{tab:errors_stick_time} and \ref{tab:errors_slip_time}, and in Figure \ref{fig:h1_errors_time}. These results indicate first-order convergence in time for both variables, as predicted by the theoretical error estimate \eqref{eq:asymptotic_estimate}.

\begin{table}[ht!]
\centering
\begin{tabular}{ | l | l | l | l | l | }
\hline
$T/\Delta t$ & $H^1$-pressure error & Pressure rate & $H^1$-displacement error & Displacement rate \\
\hline
1 & 0.17434072 & - & 0.03495026 & - \\
\hline
2 & 0.17883025 & -0.03668119 & 0.03022213 & 0.20969777 \\
\hline
4 & 0.1346983 & 0.40885912 & 0.01953443 & 0.62958605 \\
\hline
8 & 0.07744271 & 0.79853036 & 0.01117816 & 0.80533722 \\
\hline
16 & 0.0419831 & 0.88332081 & 0.00593149 & 0.91421611 \\
\hline
32 & 0.02076706 & 1.01551128 & 0.00293915 & 1.01299727 \\
\hline
\end{tabular}
\caption{$H^1$-errors and temporal convergence rates for the case with $\mu_{\text{fr}}=10$.}
\label{tab:errors_stick_time}
\end{table}

\begin{table}[ht!]
\centering
\begin{tabular}{ | l | l | l | l | l | }
\hline
$T/\Delta t$ & $H^1$-pressure error & Pressure rate & $H^1$-displacement error & Displacement rate \\
\hline
1 & 0.17355342 & - & 0.06048519 & - \\
\hline
2 & 0.17479689 & -0.01029969 & 0.06742579 & -0.15671852 \\
\hline
4 & 0.13188048 & 0.40644844 & 0.0429497 & 0.65065226 \\
\hline
8 & 0.07585641 & 0.79788801 & 0.02444939 & 0.81284997 \\
\hline
16 & 0.04125199 & 0.87880744 & 0.01287745 & 0.92495117 \\
\hline
32 & 0.02045156 & 1.0122525 & 0.0062945 & 1.0326823 \\
\hline
\end{tabular}
\caption{$H^1$-errors and temporal convergence rates for the case with $\mu_{\text{fr}}=0.01$.}
\label{tab:errors_slip_time}
\end{table}

\begin{figure}[H]
    \centering
    \captionsetup[subfigure]{labelformat=empty}
    \begin{subfigure}{0.49\textwidth} 
    \includegraphics[width=\textwidth]{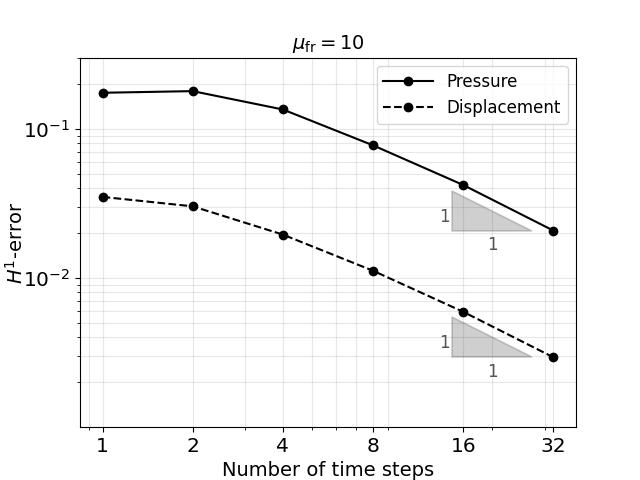}
    \caption{}
    \label{subfig:h1_errors_time_stick}
    \end{subfigure}
    \begin{subfigure}{0.49\textwidth} 
    \includegraphics[width=\textwidth]{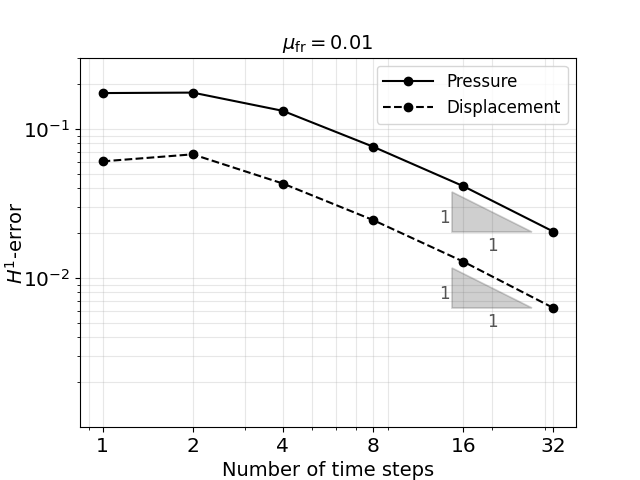}
    \caption{}
    \label{subfig:h1_errors_time_slip}
    \end{subfigure}
    \caption{$H^1$-errors for various values of the time step size $\Delta t$, for the case with a high friction coefficient (left) and low friction coefficient (right).} 
    \label{fig:h1_errors_time}
\end{figure}

\section{Conclusion} \label{sec:conclusion}

We have performed a space-time numerical analysis of the Biot equations for a poro-visco-elastic medium, subject to contact constraints including normal compliance and Coulomb friction. The resulting variational problem consists of a linear PDE coupled to a nonlinear variational inequality. This problem was discretized in space by conformal finite elements, and in time by the implicit Euler method. Our analysis consisted of proving existence and uniqueness of the discrete solution, deriving stability estimates of the discrete solution, and deriving a priori error estimates between the continuous and discrete solutions.

The existence proof was done by a novel method, in which the frictional contact problem was first reduced to the simpler Tresca friction model. Existence of a solution to this simpler problem was proven with the help of the well-known fixed-stress splitting scheme from poroelasticity. In the end, we obtain the existence to the problem with Coulomb friction by a fixed-point argument. The a priori error estimate was derived in the $H^1$-norm for the sum of pressure and displacement. Optimal first-order convergence in time was obtained, while the order in space was reduced by a square root, which is typical for problems involving contact mechanics. Numerical experiments were conducted to verify the theoretical results, where convergence rates were computed separately in space and time, and separately for the pressure and displacement. First-order convergence in time were obtained for both variables. The order in space was optimal for the pressure variable, while the order for the displacement was slightly reduced. 

\section*{Funding}

This project has received funding from the European Research Council (ERC) under the European Union’s Horizon
2020 research and innovation program (grant agreement No. 101002507).

\section*{Data Availability Statement}

Runscripts for reproducing the results of the numerical experiments can be found in the
Docker image in \cite{zenodo_biot}.

\printbibliography

\end{document}